\documentclass[11pt,a4paper]{amsart}
\usepackage{newtxtext,newtxmath} 
\usepackage[dvipsnames]{xcolor} 
\usepackage[all]{xy} 
\usepackage{enumerate} 
\usepackage{hyperref,cite} 
\usepackage[para]{threeparttable} 
\usepackage{caption} 
\usepackage{float} 
\usepackage{pgf,tikz-cd} 



\sloppy\pagestyle{plain}
\setlength{\textwidth}{16cm} \setlength{\textheight}{23cm}
\oddsidemargin=0mm
\evensidemargin=0mm \topmargin=-10pt
\setlength{\parindent}{0 pt}
\footskip=20pt
\setlength{\arrayrulewidth}{0.3mm} 

\newtheorem*{theorem*}{Theorem}
\newtheorem*{maintheorem}{Main Theorem}
\newtheorem{theorem}[equation]{Theorem}
\newtheorem{lemma}[equation]{Lemma}
\newtheorem{proposition}[equation]{Proposition}
\newtheorem{corollary}[equation]{Corollary}

\newtheorem{question}[equation]{Question}

\theoremstyle{definition}
\newtheorem*{principal}{Principal cases}

\newtheorem*{notation}{Notations}
\newtheorem{remark}[equation]{Remark}
\newtheorem*{ack}{Acknowledgement}
\newtheorem*{fund}{Funding}


\newcommand{\supp}{\operatorname{Supp}}

\newcommand{\mult}{\operatorname{mult}}

\hypersetup{
    colorlinks   = true,    
    urlcolor     = Mahogany,    
    linkcolor    = Mahogany,    
    citecolor    = Mahogany,       
  pdftitle={Rigid affine cones over singular del Pezzo surfaces},%
  pdfauthor={In-Kyun Kim, Jaehyun Kim, and Joonyeong Won},%
  pdfkeywords={\texorpdfstring{$\mathbb{G}_{a}$-action, anticanonical polar cylinder, weighted projective space}{\mathbb{G}{a}-action, anticanonical polar cylinder, weighted projective space}},  
  }

\makeatletter\@addtoreset{equation}{section} \makeatother 

\makeatletter 
\def\@seccntformat#1{\@ifundefined{#1@cntformat}%
    {\csname the#1\endcsname\quad} 
    {\csname #1@cntformat\endcsname}} 
\newcommand{\section@cntformat}{\thesection.~~~}
\makeatother


\title{Rigid affine cones over singular del Pezzo surfaces}

\author{In-Kyun Kim}

\address{ \emph{In-Kyun Kim}\newline \textnormal{June E Huh Center for Mathematical Challenges, Korea Institute for Advanced Study \newline 
\medskip 85 Hoegiro Dongdaemun-gu, Seoul 02455, Republic of Korea.\newline 
\texttt{soulcraw@kias.re.kr}}}

\author{Jaehyun Kim}

\address{ \emph{Jaehyun Kim}\newline \textnormal{Department of Mathematics, Ewha Womans University \newline 
\medskip 52, Ewhayeodae-gil, Seodaemun-gu, Seoul, 03760, Republic of Korea.\newline 
\texttt{kjh6691@ewha.ac.kr}}}

\author{Joonyeong Won}

\address{ \emph{Joonyeong Won}\newline \textnormal{Department of Mathematics, Ewha Womans University \newline 
\medskip 52, Ewhayeodae-gil, Seodaemun-gu, Seoul, 03760, Republic of Korea.\newline 
\texttt{leonwon@ewha.ac.kr}}}

\subjclass[2020]{14R20, 14R25}
\keywords{$\mathbb{G}_{a}$-action, anticanonical polar cylinder, weighted projective space}


\begin{document}

\begin{abstract}
    We completely determine the existence of anticanonical polar cylinders in quasi-smooth log del Pezzo surfaces of index one.
\end{abstract}

\maketitle

All varieties considered in this article are assumed to be defined over an algebraically closed field of characteristic zero.


\section{Introduction}\label{sec:intro}
This paper aims to classify affine cones over certain log del Pezzo surfaces in weighted projective spaces that are rigid, meaning they do not admit any non-trivial $\mathbb{G}_a$-action. The $\mathbb{A}^1$-fibrations have been extensively studied in relation to $\mathbb{G}_a$-actions in affine algebraic geometry. A dominant morphism of algebraic varieties is called an \emph{$\mathbb{A}^1$-fibration} if the general fiber is isomorphic to $\mathbb{A}^1$. When an $\mathbb{A}^1$-fibration exists between two affine varieties, it coincides with the algebraic quotient morphism induced by the action of the additive group scheme $\mathbb{G}_a$. Conversely, the fibration with a complete base variety behaves similarly to a $\mathbb{P}^1$-fibration on a projective variety, as demonstrated in \cite{Miyanishi1978}. Specifically, a smooth affine surface ~$V$ is called a \emph{Gizatullin surface} if it has an embedding into a smooth projective surface $X$ such that the divisor $D= X\setminus V$ forms a linear chain of smooth rational curves with transversal intersections. The Gizatullin surface shares similarities with $\mathbb{A}^2$ according to the result on ~$\mathbb{A}^1$-fibration by Abhyankar, Moh, and Suzuki in \cite{Abhyankar1975, Suzuki1974}.

\begin{theorem}[\hspace{1sp}{\cite{Russell2001,Gizatullin1970,Gizatullin1971,Miyanishi2005}}]
\label{thm:Gizatullin}
    Any curve on a Gizatullin surface is a fiber of an $\mathbb{A}^1$-fibration if it is isomorphic to $\mathbb{A}^1$.
\end{theorem}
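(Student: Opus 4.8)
The plan is to pass to a smooth projective completion and to realize the given curve as a fiber of a $\mathbb{P}^1$-fibration by means of elementary transformations of the boundary zigzag. Write $C\cong\mathbb{A}^1$ for the given curve and fix, as in the definition of a Gizatullin surface, an embedding $V\hookrightarrow X$ into a smooth projective surface with boundary $D=X\setminus V$ a linear chain of smooth rational curves with transversal intersections. Let $\bar C\subset X$ denote the closure of $C$. Since $C$ is isomorphic to $\mathbb{A}^1$, its normalization is $\mathbb{P}^1$ and it has a single place at infinity; hence, after finitely many blow-ups of points of $D$ — an operation that does not affect $V$ — I may assume that $D+\bar C$ has simple normal crossings, that $\bar C$ is smooth and rational, and that $\bar C$ meets $D$ transversally in the single point $q:=\bar C\cap D$, which (using the freedom in the choice of completion, possibly after further transformations of $D$) I arrange to lie on an extremal component of the chain $D$. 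At this stage the configuration is the zigzag $D$ with the curve $\bar C$ attached as a single feather at one end.

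The core of the argument is to modify this completion, using only elementary transformations centered at points of $D$, so that the open surface $V$ is preserved, until $\bar C$ becomes a $0$-curve. First I would blow up $q$ and then continue with the Danilov--Gizatullin calculus of zigzags \cite{Gizatullin1970,Gizatullin1971}: each elementary transformation, namely a blow-up at a boundary point followed by the contraction of a resulting $(-1)$-curve inside the boundary, keeps $D$ a linear chain, keeps $\bar C$ a smooth rational curve attached to an extremal component, and alters the self-intersection sequence in a controlled way. The target of the reduction is the relation $\bar C^2=0$. Once it holds, adjunction gives $\bar C\cdot K_X=-2$ and Riemann--Roch on the rational surface $X$ yields $h^0(X,\mathcal{O}_X(\bar C))\ge 2$; since the $0$-curve $\bar C$ is nef and moves, the pencil $|\bar C|$ is base-point free and defines a $\mathbb{P}^1$-fibration $f\colon X\to\mathbb{P}^1$ having $\bar C$ as a fiber.

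It then remains to read off the $\mathbb{A}^1$-fibration on $V$. Because $V$ is affine it admits no non-constant morphism onto a complete curve, so $f|_V$ factors through an affine open $\mathbb{A}^1\subset\mathbb{P}^1$, the deleted point corresponding to the fiber components contained in $D$. Since $\bar C$ is a full fiber lying over a point of this $\mathbb{A}^1$, it is disjoint from the other vertical boundary components and meets $D$ only along the unique horizontal (section) component, consistently with the single intersection point $q$. Consequently $f|_V\colon V\to\mathbb{A}^1$ is an $\mathbb{A}^1$-fibration whose general fiber is $\mathbb{P}^1$ minus one point, and $C=\bar C\cap V$ is one of its fibers, as desired.

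I expect the main obstacle to be the reduction step of the second paragraph: arranging the elementary transformations so that the boundary remains a zigzag, the curve $\bar C$ remains smooth and attached at an end, and the self-intersection $\bar C^2$ is actually driven to $0$. This is exactly where the standard-form theory of zigzags \cite{Gizatullin1970,Gizatullin1971,Miyanishi2005,Russell2001} enters, and one must verify that the terminal configuration places $D$ in the horizontal-plus-vertical position required for $f|_V$ to be a genuine $\mathbb{A}^1$-fibration rather than merely a pencil.
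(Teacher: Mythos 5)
The paper does not prove Theorem \ref{thm:Gizatullin}: it is quoted as a known result from the cited works of Gizatullin, Daigle--Russell, and Gurjar--Miyanishi, so there is no in-text argument to compare yours against. Judged on its own terms, your proposal correctly identifies the standard strategy (complete, resolve the unique place at infinity, normalize the completion by elementary transformations until $\bar C$ becomes a $0$-curve, then read off the fibration), and the final step is sound: once $\bar C$ is a smooth rational $0$-curve meeting $D$ transversally in a single point, $D\cdot\bar C=1$ forces the horizontal part of $D$ to be a single section, so $f|_V$ is indeed an $\mathbb{A}^1$-fibration having $C$ as a fiber.

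The problem is that the middle step you defer is not a technical obstacle but the entire content of the theorem. Concretely: (i) resolving the place of $\bar C$ at infinity requires blowing up points of $D$ that may be free interior points of the chain, after which the boundary is no longer a zigzag, so the ``calculus of zigzags'' you invoke is not directly applicable; the correct framework is that of standard completions with \emph{feathers} attached, and showing that the feather $\bar C$ can be moved to an extremal position is already essentially equivalent to the statement being proved. (ii) Elementary transformations centered away from $q=\bar C\cap D$ leave $\bar C^2$ unchanged, those centered at $q$ only decrease it, and increasing $\bar C^2$ requires contracting boundary $(-1)$-curves through $q$ whose existence is precisely what the standard-form theory must supply; already for $V=\mathbb{A}^2$ this reduction \emph{is} the Abhyankar--Moh--Suzuki theorem, which your argument would therefore have to reprove rather than cite as a model. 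As written, the proposal is an accurate road map of the proofs in the references, but not a proof.
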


It is well-known that a Gizatullin surface admits two independent $\mathbb{G}_a$-actions, which correspond to ~$\mathbb{A}^1$-fibrations. Consequently, the projective surface containing the Gizatullin surface admits a $\mathbb{G}_a$-action on some Zariski open subset. According to \cite{Miyanishi1975}, this implies that the projective surface contains an ~\emph{$\mathbb{A}^1$-cylinder}, meaning a Zariski open subset is isomorphic to $\mathbb{A}^1 \times Z$ for some affine curve $Z$. If the complement of the cylinder is supported on an effective $\mathbb{Q}$-divisor, which is $\mathbb{Q}$-linearly equivalent to an ample divisor ~$H$, then the cylinder is said to be \emph{$H$-polar}. The existence of such an ample polar cylinder implies a nontrivial ~$\mathbb{G}_a$-action on the corresponding affine cone over the projective surface as demonstrated by the following result.

\begin{theorem}[\hspace{1sp}{\cite{Kishimoto2013}}]\label{thm:correspondence}
    For an ample polarization $(X,H)$, $X$ contains an $H$-polar cylinder if and only if the generalized cone of the pair
    \begin{equation*}
        \mathrm{Spec}\left(\bigoplus_{m \geq 0}\mathrm{H}^0\left(X,\mathcal{O}_X(mH)\right)\right)
    \end{equation*}
    admits a nontrivial  $\mathbb{G}_a$-action.
\end{theorem}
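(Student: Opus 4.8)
The plan is to translate both implications into the language of locally nilpotent derivations and exploit the $\mathbb{G}_m$-grading on the coordinate ring of the cone. Write $A=\bigoplus_{m\ge 0}A_m$ with $A_m=\mathrm H^0\!\left(X,\mathcal O_X(mH)\right)$ and $\hat X=\spec A$, so that the grading encodes a $\mathbb{G}_m$-action whose quotient recovers $X$: the punctured cone $\hat X\setminus\{0\}$ is a $\mathbb{G}_m$-Seifert bundle $\pi\colon\hat X\setminus\{0\}\to X$ associated with $H$. In characteristic zero a nontrivial $\mathbb{G}_a$-action on $\hat X$ is the same datum as a nonzero $\lnd$ $\partial$ of $A$, so the content of the theorem is to produce such a $\partial$ from a polar cylinder and, conversely, to recover a polar cylinder from $\partial$.

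For the forward implication, suppose $U=X\setminus\supp(D)\cong\mathbb A^1\times Z$ is an $H$-polar cylinder. I choose $e>0$ with $eD\sim eH$ integral and a section $s\in A_e$ with $\operatorname{div}(s)=eD$, so that $\{s\ne 0\}=U$ and the degree-zero part of the localization satisfies $(A_s)_0=\mathcal O(U)=\mathcal O(Z)[t]$, where $t$ is the coordinate on the $\mathbb A^1$-factor. The translation along $\mathbb A^1$ is the $\lnd$ $\delta_0=\partial/\partial t$ on $\mathcal O(U)$, with kernel $\mathcal O(Z)$. I then extend $\delta_0$ to a homogeneous degree-zero $\lnd$ $\delta$ of $A_s$ that annihilates $s$: each graded piece $(A_s)_d$ is a rank-one $\mathcal O(U)$-module, and since $\pic(\mathbb A^1\times Z)=\pic(Z)$ these modules are pulled back from $Z$, so $\delta_0$ lifts compatibly with the multiplication $(A_s)_d\otimes(A_s)_{d'}\to(A_s)_{d+d'}$ and kills $s\in(A_s)_e=s\,\mathcal O(U)$. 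Finally I clear denominators: because $A$ is finitely generated and $\delta(A)\subseteq A_s$ has bounded pole order along $\{s=0\}$, some $s^N\delta$ maps $A$ into itself, and as $s\in\ker\delta$ one has $(s^N\delta)^k=s^{Nk}\delta^{\,k}$, so $\partial:=s^N\delta$ is locally nilpotent, nonzero, and homogeneous of degree $Ne$, giving the required $\mathbb{G}_a$-action.

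For the reverse implication, I start from a nonzero $\lnd$ and pass to its top-degree homogeneous component, which is again a nonzero $\lnd$; thus I may assume $\partial$ homogeneous of some degree $d$ with graded kernel $B=\ker\partial$. A homogeneous local slice — homogeneous $a,f$ with $\partial a=f\in B$, $f\ne 0$ — produces $\sigma=a/f\in A_f$ with $\partial\sigma=1$ and $A_f=B_f[\sigma]$, that is, a $\mathbb{G}_m$-invariant cylinder $\hat X_f\cong\spec(B_f)\times\mathbb A^1$ inside the cone, whose complement is $\operatorname{div}(f)\in\lvert(\deg f)H\rvert$ and hence $\mathbb{Q}$-linearly equivalent to $H$ after scaling. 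The main obstacle is precisely the descent of this structure through the $\mathbb{G}_m$-quotient $\pi$: I must show that $U=X\setminus\supp(\operatorname{div} f)$ is a genuine cylinder $\mathbb A^1\times Z$, equivalently that $\mathcal O(U)=(A_f)_0=\bigoplus_{j\ge 0}(B_f)_{jd}\,\sigma^j$ is a polynomial ring over $(B_f)_0$. When $d=0$ the slice $\sigma$ is itself a degree-zero coordinate and the claim is immediate; for $d\ne 0$, and more generally when the Seifert bundle is nontrivial ($e>1$), the $\mathbb A^1$-fibration must be descended and then trivialized over the base after a localization chosen to be compatible with the polarity, and this bookkeeping among the grading, the slice degree $-d$, and the divisor class of $f$ is the delicate heart of the argument.
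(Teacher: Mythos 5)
The paper does not prove Theorem \ref{thm:correspondence}: it is imported verbatim from \cite{Kishimoto2013} and used as a black box, so there is no in-paper proof to measure you against; I can only judge your proposal on its own. Your forward direction has the right skeleton (localize at a section $s\in A_e$ with $\operatorname{div}(s)=eD$, act by translation on $(A_s)_0=\mathcal{O}(Z)[t]$, then clear denominators with $s^N$, which preserves local nilpotency because $s\in\ker\delta$), but the step where you claim $\delta_0$ ``lifts compatibly with the multiplication'' to all of $A_s$ is precisely the point that needs an argument, and the justification you give does not supply one. Knowing that each $(A_s)_d$ is an invertible $\mathcal{O}(U)$-module pulled back from $Z$ does not make the multiplication maps $(A_s)_d\otimes(A_s)_{d'}\to(A_s)_{d+d'}$ pulled back from $Z$: an $\mathcal{O}(U)$-linear map between pulled-back modules lies in $\Hom_{\mathcal{O}(Z)}(M,M')\otimes_{\mathcal{O}(Z)}\mathcal{O}(Z)[t]$, not in $\Hom_{\mathcal{O}(Z)}(M,M')$, so defining $\delta$ degree by degree via chosen trivializations does not automatically satisfy the Leibniz rule across degrees. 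The standard repair bypasses the module-by-module lifting: identify $A_m$ with $\{g\in k(X):\operatorname{div}(g)+mD\geq 0\}\subset\mathcal{O}(U)$, so that $A\subset\mathcal{O}(Z)[t][\chi^{\pm 1}]$ with $\chi$ a degree marker killed by $\partial/\partial t$, and then verify on a finite set of generators that $s^N\partial/\partial t$ preserves $A$ for $N\gg 0$ by a pole-order estimate along $\supp(D)$.

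The reverse direction is where the real content of the theorem lies, and you stop exactly there. After reducing to a homogeneous $\partial$ of degree $d$ with a homogeneous local slice and $A_f=B_f[\sigma]$, three things remain, and none of them is routine bookkeeping: (i) since $\partial$ is homogeneous, the $\mathbb{G}_m$-action permutes the $\mathbb{G}_a$-orbits in $\spec A_f$, so the orbit fibration descends to an $\mathbb{A}^1$-fibration on $U=X\setminus\supp(\operatorname{div}(f))$ --- this must be argued, because for $d\neq 0$ the $\mathbb{G}_a$-action itself does not descend and $(A_f)_0=\bigoplus_{j\geq 0}(B_f)_{jd}\,\sigma^j$ is not visibly a polynomial ring over $(B_f)_0$; (ii) that fibration is in general only generically trivial, so one must shrink the base to a principal open subset to obtain an honest cylinder $\mathbb{A}^1\times Z'$; (iii) the shrunk cylinder must still be shown to be $H$-polar, i.e.\ one needs an effective $\mathbb{Q}$-divisor $\sim_{\mathbb{Q}}H$ whose support is exactly $\supp(\operatorname{div}(f))$ together with the closures of the fibers over $Z\setminus Z'$; this uses ampleness of $H$ (every irreducible divisor lies in the support of some effective $\mathbb{Q}$-divisor $\sim_{\mathbb{Q}}H$, and one then takes a convex combination with $\tfrac{1}{\deg f}\operatorname{div}(f)$). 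Calling (i)--(iii) ``the delicate heart of the argument'' is accurate, but it means the implication is asserted rather than proved, so the proposal as written is incomplete.
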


From the perspective of Theorem \ref{thm:correspondence}, the existence of ample polar cylinders in del Pezzo surfaces has been comprehensively studied by numerous authors. For instance, the ample polar cylindricity of smooth del Pezzo surfaces is considered in \cite{Cheltsov2016, Cheltsov2017, Kishimoto2009, Kishimoto2014}. The anticanonical polar cylindricity of du Val del Pezzo surfaces is proved in \cite{Cheltsov2016s}, with further extensions to general ample divisors in \cite{Sawahara2025}. In \cite{KKW2024}, it is first shown that the absence of anticanonical polar cylinders in certain log del Pezzo hypersurfaces in weighted projective spaces, called the infinite series. In this context, it is natural to explore other cases in weighted projective spaces that admit anticanonical polar cylinders. 

\begin{question}\label{question}
    Is there a log del Pezzo surface in a weighted projective space that admits an anticanonical polar cylinder?
\end{question}

Meanwhile, let $S$ be a hypersurface in $\mathbb{P}(a_0, a_1, a_2, a_3)$ given by a quasi-homogeneous polynomial ~$\varphi(x, y, z, t)$ of degree $d$ with respect to the weights, $1 \leq a_0 \leq a_1 \leq a_2 \leq a_3$. The equation 
\begin{equation*}
    \varphi(x, y, z, t) = 0 \subset \mathrm{Spec}(\mathbb{C}[x, y, z, t])
\end{equation*}
defines a three-dimensional hypersurface with quasi-homogeneous singularity $(V, o)$, where $o = (0, 0, 0, 0)$. Recall that $S$ is said to be \emph{quasi-smooth} if the singularity $(V, o)$ is isolated, and ~$S$ is said to be \emph{well-formed} if it contains no codimension two singular stratum in $\mathbb{P}(a_0, a_1, a_2, a_3)$. Furthermore, the number defined by $a_0 + a_1 + a_2 + a_3 - d$ is called the \emph{index} of ~$S$, and it is known that $S$ is a del Pezzo surface, if $a_0 + a_1 + a_2 + a_3-d > 0 $.\\ 
The quasi-smooth well-formed del Pezzo surfaces in $\mathbb{P}(a_0, a_1, a_2, a_3)$ are thoroughly described in \cite{Paemurru2018}. These surfaces have indices ranging from one to six, as well as two additional special families, called the infinite series and sporadic cases. In the same manner, the complete intersection log del Pezzo surfaces in $\mathbb{P}(a_0, a_1, a_2, a_3, a_4)$ of codimension two are also described in \cite{KP2015}. For convenience, the lists of these surfaces are provided in Section \ref{sec:appendix}, including 23 hypersurfaces and 39 complete intersections.\\
 It is worth emphasizing the surface listed as \emph{No.38} in Table \ref{table_complete intersection}, which is the complete intersection of two hypersurfaces of degree $2n$ in $\mathbb{P}(1, 1, n, n, 2n-1)$. As a generalization of a smooth del Pezzo surface of degree $4$ when $n=1$, this surface shows strong potential to admit an anticanonical polar cylinder. We have confirmed that this is the only case admitting an anticanonical polar cylinder when $n=1$ among the total 62 surfaces. The absence of an anticanonical polar cylinder of this surface with higher $n$ is established in Section \ref{sec:absence2}, while the absence of the other cases is discussed in Section \ref{sec:absence1}. We proceed to state our main theorem that answers Question ~\ref{question}.

\begin{maintheorem}\label{thm:main}
    For a quasi-smooth well-formed complete intersection log del Pezzo surface $X_{d_1,\dots,d_{n-2}}$ in $\mathbb{P}(a_0, \dots, a_{n})$ of index one, it admits an anticanonical polar cylinder if and only if it is a complete intersection of two quadric hypersurfaces in $\mathbb{P}^4$.
\end{maintheorem}

\begin{corollary}\label{cor:rigid}
Affine cones over a quasi-smooth well-formed complete intersection log del Pezzo surfaces of index one are rigid, except when the surface is the complete intersection of two quadric hypersurfaces in $\mathbb{P}^4$.
\end{corollary}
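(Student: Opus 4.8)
The Corollary is an immediate consequence of the Main Theorem together with Theorem~\ref{thm:correspondence}: by definition the affine cone is rigid precisely when it carries no nontrivial $\mathbb{G}_a$-action, which by the correspondence is equivalent to the absence of an anticanonical polar cylinder in $X$. So the plan is to prove the Main Theorem, organized around this correspondence. The first step is a reduction. For a quasi-smooth well-formed complete intersection $X = X_{d_1,\dots,d_{n-2}} \subset \mathbb{P}(a_0,\dots,a_n)$ the adjunction formula gives $K_X \sim \mathcal{O}_X\!\left(\sum_j d_j - \sum_i a_i\right)$, so index one means $-K_X \sim \mathcal{O}_X(1)$. Hence the generalized cone of Theorem~\ref{thm:correspondence} is the affine cone $\widehat{X} = V(\varphi_1,\dots,\varphi_{n-2}) \subset \mathbb{A}^{n+1}$ cut out by the defining equations, and the question becomes whether $\widehat{X}$ admits a nontrivial $\mathbb{G}_a$-action. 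Since the coordinate ring is positively graded, a nontrivial locally nilpotent derivation exists if and only if a nontrivial homogeneous one does, which pins down the object to analyze.

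For the ``if'' direction I would treat $X = X_{2,2} \subset \mathbb{P}^4$, the smooth del Pezzo surface of degree four, on its own. Its anticanonical polar cylinder is already available from the du Val analysis of \cite{Cheltsov2016s}; concretely, the pencil of quadrics through $X$ furnishes a fibration with reducible fibers whose complement is supported on an anticanonical divisor, yielding the cylinder and hence the $\mathbb{G}_a$-action on $\widehat{X}$.

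For the ``only if'' direction I would run through the $62$ surfaces listed in Section~\ref{sec:appendix} and rule out anticanonical polar cylinders for every one except $X_{2,2} \subset \mathbb{P}^4$. Using the geometric side of the correspondence, any such cylinder produces an effective $D \sim_{\mathbb{Q}} -K_X$ with $X \setminus \mathrm{Supp}(D) \cong \mathbb{A}^1 \times Z$, hence an $\mathbb{A}^1$-fibration on a minimal resolution of $X$. The strategy in Section~\ref{sec:absence1} is to exploit the cyclic quotient singularities forced by the weights $a_i$: the boundary of any extending $\mathbb{P}^1$-fibration must resolve these points, and a numerical comparison shows the resulting complement cannot simultaneously be effective and $\mathbb{Q}$-linearly equivalent to $-K_X$. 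This reduces each finite case to bounded bookkeeping on the dual graph of the resolution together with the degree data read off from the embedding.

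The hard part will be the infinite family No.38, namely $X_{2n,2n} \subset \mathbb{P}(1,1,n,n,2n-1)$ for $n \geq 2$ (Section~\ref{sec:absence2}), where a case-by-case check is impossible. Here I expect to argue uniformly in $n$ by classifying the homogeneous locally nilpotent derivations of $\mathbb{C}[x_0,x_1,x_2,x_3,x_4]/(\varphi_1,\varphi_2)$ with weights $(1,1,n,n,2n-1)$. The quotient singularities that appear precisely when $n \geq 2$, and that are absent at $n=1$, are what must obstruct the derivation: the plan is to show that any homogeneous LND is forced to be incompatible with the two weighted-homogeneous relations of degree $2n$, so that its kernel is too small to support a nontrivial $\mathbb{G}_a$-action. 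Making this obstruction work uniformly in $n$, rather than the comparatively routine numerics of the finite list, is where the genuine difficulty lies.
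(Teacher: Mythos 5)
Your opening reduction is exactly how the paper obtains this Corollary: it is an immediate consequence of the Main Theorem together with Theorem~\ref{thm:correspondence}, after identifying the generalized cone with the affine cone cut out by the defining equations (index one giving $-K_X\sim\mathcal{O}_X(1)$). The existence half for $X_{2,2}\subset\mathbb{P}^4$ is also unproblematic (the paper cites \cite{Kishimoto2009} rather than \cite{Cheltsov2016s}, but both suffice). If your proposal ended there, it would match the paper. The difficulty is that everything after that is a plan for proving the Main Theorem itself, and that plan both diverges from the paper and leaves the decisive steps open.

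Concretely, two gaps. First, for the finite list your ``bounded bookkeeping on the dual graph of the resolution'' is not the paper's mechanism and, as stated, has no teeth: the paper's workhorse is Theorem~\ref{thm:alpha1}, that $\mathrm{lct}(X,-K_X)\geq 1$ forbids anticanonical polar cylinders, which together with the tabulated thresholds (Theorem~\ref{thm:alpha2} and the references for Table~\ref{table_hypersurface}) eliminates all but four families at once. The three remaining finite cases $S_{10}$, $S_{15}$, $S_{6,8}$ have $\mathrm{lct}<1$, so no soft numerical comparison of the boundary with $-K_X$ can rule them out; the paper needs the multiplicity arguments of Lemmas~\ref{lem:lc} and~\ref{lem:support} to force $H_x\subset\supp(D)$, and then Lemmas~\ref{lem:convex} and~\ref{lem:bdry} at the base point of the cylinder pencil. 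Your sketch supplies nothing that would separate these surfaces from the cylindrical case. Second, and more seriously, for the infinite family $S_{2n,2n}$ your proposal is to classify homogeneous locally nilpotent derivations of the bigraded coordinate ring and show they conflict with the two relations of degree $2n$; you yourself concede that making this work uniformly in $n$ is ``where the genuine difficulty lies,'' i.e.\ the argument is not given. The paper's actual route is entirely geometric: the non-lc locus of $(S_{2n,2n},D)$ is pinned to $\mathsf{p}_w$ (Lemma~\ref{lem:pw}), the weighted blow-up at $\mathsf{p}_w$ is contracted through a chain of $(-1)$-curves to a smooth cubic surface (Lemma~\ref{lem:tau}), the unique non-log-canonical effective anticanonical divisor on the cubic surface from \cite{Cheltsov2016} is pulled back to produce $T$ with $\supp(T)\subset\supp(D)$ (Proposition~\ref{prop:support2}), and Lemmas~\ref{lem:convex} and~\ref{lem:bdry} then give the contradiction. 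Without this reduction, or a worked-out substitute for it, the Main Theorem --- and hence the Corollary --- is not established by your proposal.
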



\section{Preliminaries}\label{sec:pre}
In this section, we recall some local inequalities relevant to our study. Let $S$ be a projective surface with at most klt singularities and let $D$ be an effective $\mathbb{Q}$-divisor on $S$ written as
\begin{equation*}
    D=\sum\limits_{i=1} ^{r} a_i D_i.
\end{equation*}
 
We now state the following lemma.

\begin{lemma}[\hspace{1sp}{\cite[Proposition 9.5.13]{Lazarsfeld2004a}}]\label{lem:multiplicity}
Let $\mathsf{p}$ be a smooth point of the surface $S$. If the log pair ~$(S,D)$ is not log canonical at $\mathsf{p}$, then 
\begin{equation*}
    \mult_{\mathsf{p}}(D) > 1.
\end{equation*}   
\end{lemma}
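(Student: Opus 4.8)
The plan is to prove the contrapositive: if $\mult_{\mathsf{p}}(D)\leq 1$, then $(S,D)$ is log canonical at $\mathsf{p}$. Since $\mathsf{p}$ is a smooth point of the surface $S$, the statement is purely local and the singularities of $S$ elsewhere are irrelevant; I may therefore assume $S$ is smooth in a neighbourhood of $\mathsf{p}$. For surfaces, every divisorial valuation centered at $\mathsf{p}$ is realized by a finite composition of point blow-ups, so it suffices to test the log discrepancies of such exceptional divisors.

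First I would blow up $\mathsf{p}$. Writing $\pi\colon \tilde{S}\to S$ for this blow-up with exceptional curve $E_1$, the standard formulas $K_{\tilde{S}}=\pi^{*}K_S+E_1$ and $\pi^{*}D=\tilde{D}+\mult_{\mathsf{p}}(D)\,E_1$ yield the crepant log pull-back
\begin{equation*}
    K_{\tilde{S}}+\tilde{D}+\bigl(\mult_{\mathsf{p}}(D)-1\bigr)E_1=\pi^{*}(K_S+D),
\end{equation*}
where $\tilde{D}$ is the strict transform. Because crepant pull-back preserves log canonicity, $(S,D)$ is log canonical at $\mathsf{p}$ if and only if $\bigl(\tilde{S},\ \tilde{D}+(\mult_{\mathsf{p}}(D)-1)E_1\bigr)$ is log canonical along $E_1$. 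By hypothesis the coefficient $\mult_{\mathsf{p}}(D)-1\leq 0$, so $E_1$ itself is no obstruction, and the problem is reduced to checking the finitely many points of $E_1\cap\tilde{D}$.

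Next I would iterate this reduction. At any infinitely near point $q\in E_1\cap\tilde{D}$ the multiplicity of the strict transform does not increase, $\mult_q(\tilde{D})\leq \mult_{\mathsf{p}}(D)\leq 1$; blowing up $q$, the new exceptional divisor $E_2$ acquires coefficient $\mult_q(\tilde{D})+(\mult_{\mathsf{p}}(D)-1)-1\leq \mult_{\mathsf{p}}(D)-1\leq 0$, using $\mult_q(E_1)=1$. Inductively, every exceptional divisor extracted over $\mathsf{p}$ receives coefficient $\leq 0$, while each component $D_i$ through $\mathsf{p}$ has coefficient $a_i\leq a_i\,\mult_{\mathsf{p}}(D_i)\leq \mult_{\mathsf{p}}(D)\leq 1$. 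Running the process until $\tilde{D}$ together with the exceptional curves becomes simple normal crossing, every coefficient of the resulting boundary is $\leq 1$, which is exactly the log canonicity criterion in the SNC case; hence $(S,D)$ is log canonical at $\mathsf{p}$.

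The main obstacle is the bookkeeping in the inductive step: I must justify carefully that the multiplicity of the strict transform is non-increasing under blow-ups of infinitely near points, and that the accumulated exceptional coefficients never exceed $1$, so that the terminal SNC pair genuinely satisfies the coefficient bound. Equivalently, one may package the whole estimate as the inequality $\mathrm{lct}_{\mathsf{p}}(S;D)\geq 1/\mult_{\mathsf{p}}(D)$, whose right-hand side is $\geq 1$ precisely when $\mult_{\mathsf{p}}(D)\leq 1$; this is the content of the cited \cite[Proposition 9.5.13]{Lazarsfeld2004a}, and invoking it directly gives the shortest route to the claim.
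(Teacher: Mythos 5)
Your argument is correct. Note that the paper does not prove this lemma at all: it is quoted verbatim from \cite[Proposition 9.5.13]{Lazarsfeld2004a}, so the ``shortest route'' you mention at the end is in fact the paper's entire proof. Your direct argument is the standard one and is sound: the two facts carrying the induction --- that $\mult_q$ of the strict transform of an effective divisor at an infinitely near point never exceeds $\mult_{\mathsf{p}}(D)\leq 1$ (since $\mult_q(\widetilde{D_i})\leq(\widetilde{D_i}\cdot E)_q\leq \widetilde{D_i}\cdot E=\mult_{\mathsf{p}}(D_i)$), and that every exceptional coefficient in the crepant pullback stays $\leq 0$ --- are both true and together give coefficients $\leq 1$ on the terminal SNC model. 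The one bookkeeping point you should make explicit is the case where the centre $q$ of a later blow-up lies on \emph{two} exceptional curves $E_i$, $E_j$: the new coefficient is then $\mult_q(D^{(k)})+c_i+c_j-1$, and the bound $\leq 0$ still holds precisely because $c_i,c_j\leq 0$ by the induction hypothesis, not merely because the multiplicities are small. With that case included, the induction closes and the proof is complete; it is an acceptable self-contained substitute for the citation.
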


For orbifold singularities, the following analogous result holds. 

\begin{lemma}[\hspace{1sp}{\cite[Proposition 3.16]{Kollar1997}}]\label{lem:multiplicity_orbifold}
    Let $\mathsf{p}$ be a singular point of type $\frac{1}{r}(a, b)$ on the surface $S$.
    If the log pair $(S,D)$ is not log canonical at $\mathsf{p}$, then 
\begin{equation*}
    \mult_{\mathsf{p}}(D) > \frac{1}{r},
\end{equation*}

where the multiplicity is the orbifold multiplicity defined by the finite covering of the cyclic quotient.
\end{lemma}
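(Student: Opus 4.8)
The plan is to reduce the orbifold inequality to the smooth-point case of Lemma~\ref{lem:multiplicity} by passing to the local cyclic cover. First I would realize the germ of the quotient singularity $\frac{1}{r}(a,b)$ as $(S,\mathsf{p}) = (\mathbb{A}^2,\mathsf{q})/\mu_r$, where $\mu_r$ acts by $\zeta\cdot(u,v) = (\zeta^a u, \zeta^b v)$ with $\gcd(a,r) = \gcd(b,r) = 1$, and let $\pi\colon (\mathbb{A}^2,\mathsf{q}) \to (S,\mathsf{p})$ be the quotient morphism, a finite map of degree $r$. Since $\gcd(a,r) = \gcd(b,r) = 1$, the fixed locus of $\mu_r$ is the single point $\mathsf{q}$, so the action is free away from $\mathsf{q}$ and $\pi$ is étale in codimension one; in particular the ramification divisor vanishes and $K_{\mathbb{A}^2} = \pi^* K_S$ near $\mathsf{q}$.

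The essential input is the transformation rule for log discrepancies under a finite morphism that is étale in codimension one: for a divisor $E$ over $S$ centered at $\mathsf{p}$ and a divisor $E'$ over $\mathbb{A}^2$ lying above it with ramification index $e$, one has $a(E';\mathbb{A}^2,\pi^*D)+1 = e\,(a(E;S,D)+1)$, which follows from $K_{\mathbb{A}^2}=\pi^*K_S$ together with the fact that $\pi$ introduces no ramification along the components of $D$. I expect this step to carry the real content, as it requires verifying that no discrepancy is concealed over $\mathsf{q}$ and that $\pi^*D$ is the genuine pullback with no boundary corrections; both are guaranteed by the coprimality conditions, which confine the fixed locus to $\mathsf{q}$ alone.

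Granting the comparison, the argument closes immediately. If $(S,D)$ fails to be log canonical at $\mathsf{p}$, there is a divisor $E$ over $\mathsf{p}$ with $a(E;S,D)+1 < 0$; since $e \geq 1$, the displayed formula yields $a(E';\mathbb{A}^2,\pi^*D)+1 < 0$, so $(\mathbb{A}^2,\pi^*D)$ is not log canonical at the smooth point $\mathsf{q}$. Lemma~\ref{lem:multiplicity} then gives $\mult_{\mathsf{q}}(\pi^*D) > 1$. Finally, the orbifold multiplicity is normalized precisely so that $\mult_{\mathsf{p}}(D) = \frac{1}{r}\,\mult_{\mathsf{q}}(\pi^*D)$, the factor $\frac{1}{r}$ recording the degree of the cover; substituting gives $\mult_{\mathsf{p}}(D) > \frac{1}{r}$, as required.
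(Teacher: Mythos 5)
Your argument is correct and is the standard proof of this statement: the paper gives no proof of its own (it only cites Koll\'ar's Proposition 3.16), and the intended argument is precisely your reduction to the smooth cyclic cover --- using that $\pi$ is \'etale in codimension one to get $K_{\mathbb{A}^2}=\pi^*K_S$ and the discrepancy comparison $a(E';\mathbb{A}^2,\pi^*D)+1=e\,(a(E;S,D)+1)$, then applying Lemma~\ref{lem:multiplicity} at the smooth point $\mathsf{q}$. Your normalization $\mult_{\mathsf{p}}(D)=\frac{1}{r}\mult_{\mathsf{q}}(\pi^*D)$ is exactly the convention the paper uses in practice (it is what makes the estimates of the form $C\cdot D\geq \mult_{\mathsf{p}}(C)\cdot\mult_{\mathsf{p}}(D)$ in the proof of Lemma~\ref{lem:support} come out correctly, with the curve carrying the upstairs multiplicity), so the final step is consistent with the statement as used.
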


In addition, we have the following adjunction formula for orbifold singularities.

\begin{lemma}[\hspace{1sp}{\cite[Theorem 7.5]{Kollar1997}}]\label{lem:adjuction} 
    Let $\mathsf{p}$ be a singular point of type $\frac{1}{r}(a, b)$ of the surface $S$.
    Suppose that the log pair $(S,D)$ is not log canonical at $\mathsf{p}$. If a component $D_j$ with $a_j \leq 1$ is smooth at $\mathsf{p}$, then
\begin{equation*}
    D_j \cdot (D-a_jD_j)=D_j\cdot\left(\sum_{i \neq j}a_iD_i\right)\geq \sum_{i \neq j}a_i (D_j \cdot D_i)_\mathsf{p} > \frac{1}{r}.
\end{equation*}
\end{lemma}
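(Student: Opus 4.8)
The plan is to deduce the inequality from inversion of adjunction applied to the smooth curve $D_j$, combined with an explicit evaluation of the different at the cyclic quotient singularity $\frac{1}{r}(a,b)$. First I would reduce to the case $a_j=1$. The right-hand side $\sum_{i\neq j}a_i(D_j\cdot D_i)_{\mathsf p}$ does not involve $a_j$, so I am free to enlarge the boundary: put $D'=D+(1-a_j)D_j$. As $a_j\le 1$, the divisor $D'\ge D$ is effective, and enlarging an effective boundary only decreases log discrepancies, so $(S,D')$ is still not log canonical at $\mathsf p$. Now $D_j$ occurs in $D'$ with coefficient exactly $1$, while $D'-D_j=\sum_{i\neq j}a_iD_i$ does not contain $D_j$ in its support.

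Next I would run adjunction along $D_j$. Since $S$ is normal and $D_j$ is smooth, hence normal, at $\mathsf p$, the surface adjunction formula gives
\begin{equation*}
    (K_S+D')\big|_{D_j}=K_{D_j}+\operatorname{Diff}_{D_j}(D'-D_j),
\end{equation*}
and inversion of adjunction turns the failure of log canonicity of $(S,D')$ at $\mathsf p$ into the failure of log canonicity of $\bigl(D_j,\operatorname{Diff}_{D_j}(D'-D_j)\bigr)$ at $\mathsf p$. Because $D_j$ is a smooth curve near $\mathsf p$, the latter simply says that the coefficient of $\mathsf p$ in this different exceeds $1$.

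It then remains to read off the different. Decomposing $\operatorname{Diff}_{D_j}(D'-D_j)=\operatorname{Diff}_{D_j}(0)+(D'-D_j)\big|_{D_j}$, the restriction term contributes exactly $\sum_{i\neq j}a_i(D_j\cdot D_i)_{\mathsf p}$ at $\mathsf p$. For the boundary-free part I would pass to the quasi-\'etale cyclic cover $\pi\colon(\mathbb{A}^2,0)\to(S,\mathsf p)$ of degree $r$, free away from the origin; each component of $\pi^{-1}(D_j)$ is totally ramified over $\mathsf p$ with index $m\mid r$, so the coefficient of $\mathsf p$ in $\operatorname{Diff}_{D_j}(0)$ equals $1-\tfrac1m\le 1-\tfrac1r$. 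Combining the two contributions yields
\begin{equation*}
    \sum_{i\neq j}a_i(D_j\cdot D_i)_{\mathsf p}>1-\Bigl(1-\tfrac1m\Bigr)=\tfrac1m\ge\tfrac1r.
\end{equation*}
The middle inequality of the statement is then automatic: $D_j$ and the $D_i$ with $i\neq j$ are distinct prime divisors, so they meet properly with nonnegative local intersection numbers, whence $D_j\cdot(D-a_jD_j)=\sum_{i\neq j}a_i(D_j\cdot D_i)\ge\sum_{i\neq j}a_i(D_j\cdot D_i)_{\mathsf p}$.

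I expect the delicate point to be the evaluation of $\operatorname{Diff}_{D_j}(0)$. For a coordinate branch the cover is irreducible and totally ramified with index exactly $r$, giving coefficient $1-\tfrac1r$; but a general smooth branch through the singularity may have reducible preimage, and one must keep track of the stabilizer orders to confirm that the coefficient has the standard form $1-\tfrac1m$ with $m\mid r$, and in particular that it never exceeds $1-\tfrac1r$. This upper bound is exactly what converts the strict inequality above into the claimed bound $>\tfrac1r$.
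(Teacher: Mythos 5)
The paper does not actually prove this lemma: it is imported verbatim from Koll\'ar's \emph{Singularities of pairs} (cited there as Theorem 7.5), so there is no in-text argument to compare yours against, and your proposal amounts to reconstructing the standard proof. The reconstruction is essentially correct: raising the coefficient of $D_j$ to $1$ is harmless because enlarging an effective boundary only decreases log discrepancies while the right-hand side is unchanged; the passage to $\bigl(D_j,\operatorname{Diff}_{D_j}(D'-D_j)\bigr)$ is indeed the inversion-of-adjunction direction (you need ``the different is lc at $\mathsf p$ $\Rightarrow$ $(S,D')$ is lc near $D_j$ at $\mathsf p$'', which is classical for surfaces by Shokurov); and the coefficient $1-\tfrac1m$ of $\mathsf p$ in $\operatorname{Diff}_{D_j}(0)$, with $m$ the order of the stabilizer of a branch of the preimage and $m\mid r$, is Koll\'ar's formula for the different at a quotient singularity. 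Two points you flag as delicate do deserve care, and both resolve in your favor under the paper's conventions. First, ``$D_j$ smooth at $\mathsf p$'' must be read in the orbifold sense (each branch of the preimage under the cyclic cover is smooth); a generic smooth line upstairs can have stabilizer of order $m<r$, but then either its image is genuinely smooth downstairs and the formula $1-\tfrac1m\le 1-\tfrac1r$ applies (e.g.\ the image of $v=u$ at a $\tfrac14(1,3)$ point), or the image is a non-smooth germ (e.g.\ at $\tfrac13(1,2)$ it is cuspidal) and is excluded by the hypothesis. Second, the identification of the restriction term of the different with $\sum_{i\neq j}a_i(D_j\cdot D_i)_{\mathsf p}$ requires these local intersection numbers to be the orbifold ones, i.e.\ $\tfrac1r$ times the intersection numbers computed on the cover; this is exactly the convention fixed in Lemma \ref{lem:multiplicity_orbifold}, and with it the bookkeeping in your final display is consistent and yields the claimed bound $>\tfrac1r$.
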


Next, we consider a convexity argument involving effective divisors.

\begin{lemma}[\hspace{1sp}{\cite[Lemma 2.2]{Cheltsov2016}}]\label{lem:convex}
Let $T$ be an effective $\mathbb{Q}$-divisor on $S$ such that
\begin{itemize}
 \item[$\bullet$] $T \sim_{\mathbb{Q}} D$ but $T \neq D$;

 \item[$\bullet$] $T=\sum\limits_{i=1}^{r} b_iD_i$ for some non-negative rational numbers $b_i$'s.
\end{itemize}
For every non-negative rational number $\varepsilon$, put $D_{\varepsilon} = (1+\varepsilon)D-\varepsilon T$. Then
\begin{enumerate}
 \item[(1)] $D_{\varepsilon} \sim_{\mathbb{Q}} D$ for every $\varepsilon \geq 0$;

 \item[(2)] the set $\{ \varepsilon \in \mathbb{Q}_{>0} ~\vert~ D_{\varepsilon} ~\mathrm{is ~effective} \}$ attains a maximum $\mu$;
 
 \item[(3)] the support of the divisor $D_{\mu}$ does not contain at least one component of $\supp(T)$;
 
 \item[(4)] if $(S,T)$ is log canonical at $\mathsf{p}$ but $(S,D)$ is not log canonical at $\mathsf{p}$, then $(S,D_{\mu})$ is not log canonical at $\mathsf{p}$.
 \end{enumerate}
\end{lemma}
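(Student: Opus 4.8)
The plan is to treat the four assertions in turn, noting that (1)--(3) are elementary consequences of the affine-linear dependence of $D_{\varepsilon}$ on $\varepsilon$, while the genuine content is the convexity argument underlying (4). Throughout I would take $D_1,\dots,D_r$ to be the prime components of $\supp(D)$, so that each $a_i>0$; since $T$ is written over these same components, this records the (necessary) convention that $\supp(T)\subseteq\supp(D)$. Assertion (1) is then immediate, because $T\qsim D$ gives $D_{\varepsilon}=(1+\varepsilon)D-\varepsilon T\qsim (1+\varepsilon)D-\varepsilon D=D$ for every $\varepsilon\ge 0$.

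For (2) and (3) I would expand $D_{\varepsilon}=\sum_{i=1}^{r}c_i(\varepsilon)D_i$ with $c_i(\varepsilon)=a_i+\varepsilon(a_i-b_i)$, an affine-linear function with $c_i(0)=a_i\ge 0$, so that effectivity of $D_{\varepsilon}$ amounts to $c_i(\varepsilon)\ge 0$ for all $i$. Indices with $a_i\ge b_i$ impose no upper bound, whereas an index with $b_i>a_i$ forces $\varepsilon\le a_i/(b_i-a_i)$. The first real step is to produce at least one index with $b_i>a_i$: since $D-T\qsim 0$ and $T\ne D$, pairing with an ample class $H$ gives $\sum_i(a_i-b_i)(D_i\cdot H)=0$ with every $D_i\cdot H>0$, so the differences $a_i-b_i$ cannot all be nonnegative. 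Hence the feasible set is bounded and
\[
\mu=\min\Big\{\tfrac{a_i}{b_i-a_i}\ :\ b_i>a_i\Big\}
\]
is a well-defined maximum, strictly positive because every numerator $a_i$ is positive; this is (2). At $\varepsilon=\mu$ the coefficient $c_{i_0}(\mu)$ vanishes for a minimizing index $i_0$, and $b_{i_0}>a_{i_0}\ge 0$ forces $b_{i_0}>0$, so $D_{i_0}$ is a component of $\supp(T)$ missing from $\supp(D_{\mu})$; this is (3).

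The crux is (4), which I would deduce from a single observation: $D$ is a convex combination of $D_{\mu}$ and $T$. Rearranging $D_{\mu}=(1+\mu)D-\mu T$ gives
\[
D=\frac{1}{1+\mu}\,D_{\mu}+\frac{\mu}{1+\mu}\,T,
\]
where the two coefficients are nonnegative and sum to $1$. Then I would invoke convexity of the log canonical condition: on a common log resolution the log discrepancy of any divisor over $\mathsf{p}$ is affine-linear in the boundary, so if both $(S,D_{\mu})$ and $(S,T)$ were log canonical at $\mathsf{p}$, their convex combination $(S,D)$ would be too. As $(S,T)$ is log canonical at $\mathsf{p}$ by hypothesis while $(S,D)$ is not, the pair $(S,D_{\mu})$ cannot be log canonical at $\mathsf{p}$.

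The only delicate points are the existence of a component with $b_i>a_i$ and the positivity of $\mu$; both rest on the numerical rigidity of $\mathbb{Q}$-linear equivalence to zero against an ample class, together with the convention that every component of $D$ carries a strictly positive coefficient (which is exactly what guarantees the set in (2) is nonempty and $\mu>0$). I expect no serious obstacle beyond correctly setting up this support convention, since (4) then reduces to the convex-combination identity and the affine-linearity of log discrepancies.
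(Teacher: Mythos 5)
Your proof is correct, and it is essentially the standard argument for this lemma: the paper itself states the result as an import from \cite[Lemma 2.2]{Cheltsov2016} without reproducing a proof, and your route (ample-class pairing to force some $b_i>a_i$, the explicit $\mu=\min\{a_i/(b_i-a_i)\}$, and the convex-combination identity $D=\tfrac{1}{1+\mu}D_{\mu}+\tfrac{\mu}{1+\mu}T$ together with affine-linearity of discrepancies for (4)) is exactly the one in the cited source. Your explicit flagging of the convention that the $D_i$ are the prime components of $\supp(D)$ with $a_i>0$ (so that $\supp(T)\subseteq\supp(D)$ and $\mu>0$) is a point the statement leaves implicit, and is indeed needed for (2) to have a nonempty feasible set.
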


Finally, consider the case when $S$ is a rational surface containing a $(-K_{S})$-polar cylinder $U \cong \mathbb{A}^1 \times Z$ defined by $D$. Then as shown in \eqref{eq:pencil}, the natural projection $U \to Z$ induces a rational map $\rho: S \dashrightarrow \mathbb{P}^1$, and we let $\mathscr{L}$ be the linear system corresponding to $\rho$.

\begin{lemma}[\hspace{1sp}{\cite[Lemma A.3]{Cheltsov2016}}]\label{lem:bdry}
Assume that the base locus of $\mathscr{L}$ is not empty. Then for any effective ~$\mathbb{Q}$-divisor $H \sim_{\mathbb{Q}} -K_{S}$ with $\supp(H) \subset \supp(D)$, the log pair $(S,H)$ is not log canonical at the base point.
\end{lemma}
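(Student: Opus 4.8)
The plan is to resolve the pencil $\mathscr{L}$ and then to exhibit a single divisor lying over the base point $\mathsf{p}$ whose discrepancy for the pair $(S,H)$ is strictly smaller than $-1$, which is what non-log-canonicity means.

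First I would choose a birational morphism $\phi\colon\tilde S\to S$ from a smooth surface resolving the indeterminacy of $\rho$, so that $\rho$ lifts to a genuine morphism $\tilde\rho\colon\tilde S\to\mathbb{P}^1$. Writing $\tilde F$ for a general fibre we have $\tilde F\cong\mathbb{P}^1$ and $\tilde F^2=0$, hence $-K_{\tilde S}\cdot\tilde F=2$ by adjunction. Put $\tilde U=\phi^{-1}(U)\cong U$; since $U\cong\mathbb{A}^1\times Z$ and $\tilde\rho|_{\tilde U}$ is the projection to $Z$, the curve $\tilde F\cap\tilde U$ is an affine line $\mathbb{A}^1\times\{z\}$, so $\tilde F$ meets the boundary $\tilde S\setminus\tilde U$ in exactly one reduced point. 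Consequently there is a unique horizontal boundary component $E_\infty$, a section of $\tilde\rho$ over $Z$, with $E_\infty\cdot\tilde F=1$, while every other $\phi$-exceptional curve $E$ satisfies $E\cdot\tilde F=0$.

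The decisive structural point is that $E_\infty$ is $\phi$-exceptional with centre $\mathsf{p}$. Since the general fibres are pairwise disjoint over $U$, the base points of $\mathscr{L}$ lie in $S\setminus U=\supp(D)$; and the non-emptiness of the base locus means the points at infinity of all fibres collapse to a single point $\mathsf{p}\in\supp(D)$, so $E_\infty$ appears only after blowing up $\mathsf{p}$. Writing $\phi^*(-K_S)=-K_{\tilde S}+\sum_i a_iE_i$ with $a_\infty$ the discrepancy of $E_\infty$, and $F=\phi_*\tilde F$ for the corresponding general member of $\mathscr{L}$, the projection formula gives
\begin{equation*}
    H\cdot F=-K_S\cdot F=\phi^*(-K_S)\cdot\tilde F=-K_{\tilde S}\cdot\tilde F+a_\infty\,(E_\infty\cdot\tilde F)=2+a_\infty,
\end{equation*}
using $H\qsim -K_S$. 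On the other hand, as $\supp(H)\subset\supp(D)\subset S\setminus U$ the strict transform $H'$ lies in the boundary and meets a general fibre at most at the general point $\tilde F\cap E_\infty$; since this point varies with $\tilde F$ whereas $H'\cap E_\infty$ is finite, $H'\cdot\tilde F=0$. Intersecting $\phi^*H=H'+\sum_i\mathrm{ord}_{E_i}(H)\,E_i$ with $\tilde F$ therefore yields
\begin{equation*}
    \mathrm{ord}_{E_\infty}(H)=H\cdot F-H'\cdot\tilde F=2+a_\infty,
\end{equation*}
so the discrepancy of $E_\infty$ for $(S,H)$ equals $a_\infty-\mathrm{ord}_{E_\infty}(H)=-2<-1$. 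Hence $(S,H)$ is not log canonical along $E_\infty$, whose centre on $S$ is $\mathsf{p}$, as required.

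The hard part will be the geometry at infinity: justifying that after resolution there is exactly one horizontal boundary curve $E_\infty$ meeting the general fibre transversally in a single point, and that it is $\phi$-exceptional over $\mathsf{p}$ rather than a pre-existing component of $\supp(D)$. This is precisely what the non-emptiness of $\mathrm{Bs}(\mathscr{L})$ provides, and it is what separates this case from the one in which the section at infinity is already a curve on $S$. Once this is secured the intersection-theoretic bookkeeping is routine, and the argument is insensitive to whether $\mathsf{p}$ is a smooth or a cyclic quotient singular point of $S$, since only the discrepancy $a_\infty$ enters the final computation.
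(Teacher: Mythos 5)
Your argument is correct and is essentially the proof of \cite[Lemma A.3]{Cheltsov2016}, which the paper cites rather than reproduces: resolve the pencil, note that nonemptiness of the base locus forces the unique horizontal boundary component to be a $\phi$-exceptional section $E_\infty$ contracted to the base point, and then the computation $\mathrm{ord}_{E_\infty}(H)=H\cdot F=-K_S\cdot F=2+a_\infty$ (using $H'\cdot\tilde F=0$) gives discrepancy $-2<-1$. The two points you flag as needing care (uniqueness of the horizontal component, and that it is exceptional over the base point rather than a curve on $S$) are exactly the right ones, and your justifications for them are sound.
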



\section{Absence of Cylinders}\label{sec:absence1}
This section presents a detailed discussion on the absence of anticanonical polar cylinders, focusing on the role of log canonical thresholds in characterizing such properties. We begin by briefly defining the log canonical threshold and provide tables summarizing these values of quasi-smooth well-formed del Pezzo surfaces of index one in Section \ref{sec:appendix}.

\begin{notation} We summarize the notations used throughout this section.
    \begin{itemize}
        \item[$\bullet$] Let $\mathbb{P}(a_0, a_1, a_2, a_3)_{x, y, z, t}$ denote the weighted projective space with variables $x, y, z, t$ of weights $a_0, a_1, a_2, a_3$, respectively. Let $S_d$ be the quasi-smooth well-formed log del Pezzo hypersurface in $\mathbb{P}(a_0, a_1, a_2, a_3)$ defined by a quasi-homogeneous polynomial of degree $d$. These surfaces are presented in Table \ref{table_hypersurface}.
        
        \item[$\bullet$] Let $\mathbb{P}(a_0, a_1, a_2, a_3, a_4)_{x, y, z, t, w}$ be the weighted projective space with variables $x, y, z, t, w$ of weights $a_0, a_1, a_2, a_3, a_4$, respectively. Let $S_{d_1, d_2}$ be the quasi-smooth well-formed complete intersection log del Pezzo surface in $\mathbb{P}(a_0, a_1, a_2, a_3, a_4)$ defined by quasi-homogeneous polynomials
        of degrees $d_1$ and $d_2$, respectively. These surfaces are presented in Table \ref{table_complete intersection}.
        
        \item[$\bullet$] Let $D\sim_{\mathbb{Q}} -K_{S_d}$ (resp. $-K_{S_{d_1,d_2}}$) be an effective $\mathbb{Q}$-divisor on $S_d$ (resp. $S_{d_1,d_2}$).
        
        \item[$\bullet$] Let $H_x$ be the hyperplane sections on $S_d$ (resp. $S_{d_1,d_2}$) defined by $x=0$.
        
        \item[$\bullet$] In $\mathbb{P}(a_0, a_1, a_2, a_3)$, the points $\mathsf{p}_{_x}, \mathsf{p}_{_y}, \mathsf{p}_{_z}$, and $\mathsf{p}_{_t}$ are $[1:0:0:0], [0:1:0:0], [0:0:1:0]$, and $[0:0:0:1]$, respectively, and in $\mathbb{P}(a_0, a_1, a_2, a_3, a_4)$ they are similarly defined, with the additional point $\mathsf{p}_{_w}=[0:0:0:0:1]$.
        
        \item[$\bullet$] For convenience, we denote $S$ to represent $S_{10}$, $S_{15}$, and $S_{6,8}$ in Lemmas \ref{lem:lc}, \ref{lem:support}, \ref{lem:baseptfree}, and Theorem \ref{thm:absence}.
\end{itemize}
\end{notation}

\subsection{Log canonical threshold} Let $X$ be a Fano variety with at most klt singularities. 
Then the existence of anticanonical polar cylinders in $X$ is deeply connected to the log canonical threshold (lct) of the log pair $(X,-K_{X})$, which is the number defined by 
\begin{equation*}
    \mathrm{lct} (X,-K_{X}) = \mathrm{sup} \left \{ 
        \lambda \in \mathbb{Q} ~\Bigg \vert 
        \begin{array}{ll} 
        &\hspace{-0.3cm} \mathrm{the ~~log ~~pair} ~~(X, \lambda D) \mathrm{~~is ~~log ~~canonical ~~for ~~every} \\
        &\hspace{-0.3cm} \mathrm{effective} ~~\mathbb{Q} ~~\mathrm{-}~~\mathrm{divisor} ~~D ~~\mathrm{on} ~~X \mathrm{~~with} ~~D\sim_{\mathbb{Q}} -K_{X}.  
        \end{array}  \right \}. 
\end{equation*}

It is noted that Fano varieties with large lct do not contain anticanonical polar cylinders. We refer to  the result in \cite{Cheltsov2021}.

\begin{theorem}[\hspace{1sp}{\cite[Theorem 1.26]{Cheltsov2021}}]\label{thm:alpha1}
    Let $X$ be a Fano variety with at most klt singularities. If the log canonical threshold of the pair $(X,-K_{X})$ is at least one, then $X$ does not contain any $(-K_X)$-polar cylinder.
\end{theorem}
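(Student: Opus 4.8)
The plan is to prove the contrapositive: assuming $X$ carries a $(-K_X)$-polar cylinder, I construct an effective $\mathbb{Q}$-divisor $\sim_{\mathbb{Q}} -K_X$ whose pair with $X$ is not log canonical, which forces $\mathrm{lct}(X,-K_X)<1$ and contradicts the hypothesis. Write the cylinder as $U = X \setminus \supp(D) \cong \mathbb{A}^1 \times Z$, where $D$ is the effective $\mathbb{Q}$-divisor defining it, so that $D \sim_{\mathbb{Q}} -K_X$ by the polarity condition. Since the present paper applies this statement only to surfaces, I treat a surface $X$, so that $Z$ is an affine curve and the projection $U \to Z$ extends to the rational map $\rho \colon X \dashrightarrow \mathbb{P}^1$ with pencil $\mathscr{L}$ introduced before Lemma \ref{lem:bdry}, whose general member is the closure $\bar F$ of an $\mathbb{A}^1$-fiber; the general Fano case is identical in outline, with the planar multiplicity and adjunction bounds replaced by their higher-dimensional counterparts. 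In every case the divisor I test will be $D$ itself, and I split according to the base locus of $\mathscr{L}$.

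If the base locus of $\mathscr{L}$ is nonempty, I apply Lemma \ref{lem:bdry} with $H = D$. This is legitimate because $D \sim_{\mathbb{Q}} -K_X$ and $\supp(D)$ is exactly the complement of the cylinder. The lemma then gives that $(X,D)$ is not log canonical at the base point, whence $\mathrm{lct}(X,-K_X)<1$.

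If instead $\mathscr{L}$ is base-point-free, then $\rho$ is a genuine fibration, its general fiber $\bar F \cong \mathbb{P}^1$ satisfies $\bar F^2 = 0$, and adjunction gives $-K_X \cdot \bar F = 2$. Because $\bar F \cap U \cong \mathbb{A}^1$ is the complement of a single point in $\bar F \cong \mathbb{P}^1$, the curve $\bar F$ meets $\supp(D)$ in one point, so the horizontal part of $D$ is a section $\Sigma$ with $\Sigma \cdot \bar F = 1$; intersecting $D \sim_{\mathbb{Q}} -K_X$ with $\bar F$ then forces the coefficient of $\Sigma$ in $D$ to be $-K_X \cdot \bar F = 2$. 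A component of coefficient exceeding one makes $(X,D)$ non-log-canonical along $\Sigma$, so again $\mathrm{lct}(X,-K_X)<1$, completing the contradiction.

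The main obstacle is the base-point-free case once the fibers run through the quotient singularities of $X$: there the identities $\bar F^2 = 0$ and $-K_X \cdot \bar F = 2$ pick up orbifold correction terms and the intersection with the boundary may occur at an orbifold point, so the naive coefficient count must be replaced by the orbifold multiplicity and adjunction estimates of Lemmas \ref{lem:multiplicity_orbifold} and \ref{lem:adjuction} to certify that the singularity is genuinely worse than log canonical. Isolating the offending component of $D$ before applying these local bounds is where the convexity device of Lemma \ref{lem:convex} enters, allowing one to strip away the components of $D$ that do not contribute to the non-log-canonicity. Managing these local contributions, together with verifying that a nonempty base locus indeed produces a rational curve through which the boundary is forced to be singular, constitutes the technical heart of the argument.
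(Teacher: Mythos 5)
First, a point of order: the paper offers no proof of this statement. Theorem \ref{thm:alpha1} is imported verbatim from \cite[Theorem 1.26]{Cheltsov2021}, so there is no internal argument to compare yours against. Judged on its own merits, your surface argument is essentially sound, and it reassembles the very mechanism the paper deploys later for its specific surfaces (the coefficient computation $-2+\lambda_1=0$ in Lemma \ref{lem:baseptfree}, and the combination of Lemmas \ref{lem:convex} and \ref{lem:bdry} in Theorem \ref{thm:absence}): when $\mathscr{L}$ is base point free, the unique horizontal component of $D$ is a section carrying coefficient $2>1$, and when the base locus is nonempty, Lemma \ref{lem:bdry} applied with $H=D$ gives non-log-canonicity at the base point, so in either case $\mathrm{lct}(X,-K_X)<1$. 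Two caveats on that branch structure. The orbifold corrections you flag in your final paragraph are a non-issue: a general member of a base-point-free pencil avoids the finitely many singular points of $S$, so $\bar F^2=0$ and $-K_X\cdot\bar F=2$ hold without modification, and a coefficient exceeding $1$ rules out log canonicity irrespective of the ambient singularities; Lemmas \ref{lem:multiplicity_orbifold} and \ref{lem:adjuction} are not needed here. On the other hand, your nonempty-base-locus case consists entirely of invoking Lemma \ref{lem:bdry}, which is itself an external citation (\cite[Lemma A.3]{Cheltsov2016}) carrying all the weight of that case; you have relocated that half of the proof rather than supplied it.

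The genuine gap is dimensional. The theorem is stated for Fano varieties of arbitrary dimension, and the sentence claiming the general case is ``identical in outline'' is not substantiated and is not true as written: for $\dim X\geq 3$ there is no pencil $\mathscr{L}$, no induced rational map to $\mathbb{P}^1$, no dichotomy between a base point and a conic-bundle structure, and no sense in which the horizontal part of $D$ is a section of coefficient $2$; Lemma \ref{lem:bdry} is also strictly a surface statement. The actual higher-dimensional argument (as in \cite{Kishimoto2013,Cheltsov2021}) runs along the closure $\bar{\ell}$ of a general ruling line $\mathbb{A}^1\times\{z\}$ of the cylinder: one checks that $\bar{\ell}\setminus U$ is a single point $\mathsf{p}$, that the positive intersection $D\cdot\bar{\ell}=-K_X\cdot\bar{\ell}$ is concentrated entirely at $\mathsf{p}$, and then extracts non-log-canonicity of $(X,D)$ at $\mathsf{p}$ from a log resolution computation along the family of such curves. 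If you intend only to prove the surface case that the paper actually uses, you should say so and restate the theorem accordingly; as a proof of the statement as given, the argument is incomplete.
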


In particular, the lct of quasi-smooth well-formed log del Pezzo hypersurfaces of index one has been extensively computed in \cite{Araujo2002,Cheltsov2010,Johnson2001,Cheltsov2008}. We present the exact value of lct in Table \ref{table_hypersurface}. In addition, the lct of complete intersection log del Pezzo surfaces of index one are considered in \cite{KP2015,KW2019}, provided in Table \ref{table_complete intersection}.

\begin{theorem}[\hspace{1sp}{\cite{KP2015,KW2019}}]\label{thm:alpha2}
    Let $S_{d_1,d_2}$ be a quasi-smooth well-formed complete intersection log del Pezzo surface in a weighted projective space $\mathbb{P}(a_0, a_1, a_2, a_3, a_4)$ with index one, which is not the intersection of a linear cone with another hypersurface. If the surface is neither $S_{2n,2n}$ in $\mathbb{P}(1,1,n,n,2n-1)$ nor $S_{6,8}$ in $\mathbb{P}(1,2,3,4,5)$, then the log canonical threshold of the surface is at least one.
\end{theorem}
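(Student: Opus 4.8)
The plan is to argue by contradiction, showing that the arithmetic of the weights rules out a non-log-canonical anticanonical divisor on every surface of the list except the two excluded families. Since the index is one, adjunction gives $-K_S \qsim \mathcal{O}_S(1)$, so an inequality $\mathrm{lct}(S,-K_S)<1$ would furnish an effective $\mathbb{Q}$-divisor $D \qsim -K_S$ and a point $\mathsf{p}\in S$ at which $(S,D)$ is not log canonical. The first step is to localize $\mathsf{p}$: if it were a smooth point lying off the coordinate points and quotient singularities, one passes a curve $C$ of small $-K_S$-degree through it, cut out by a suitable coordinate pencil, and chosen so that $C \not\subset \supp(D)$; then $\mult_{\mathsf{p}}(D)\le D\cdot C=(-K_S)\cdot C$, which contradicts $\mult_{\mathsf{p}}(D)>1$ from Lemma~\ref{lem:multiplicity} as soon as $(-K_S)\cdot C\le 1$. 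After discarding the linear-cone intersections, this reduces the problem to the finitely many distinguished points — the coordinate points $\mathsf{p}_x,\dots,\mathsf{p}_w$ on $S$ and the cyclic quotient singularities.

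At such a point I would combine the local multiplicity bounds with the convexity reduction. If $\mathsf{p}$ is a quotient singularity of type $\tfrac1r(a,b)$, Lemma~\ref{lem:multiplicity_orbifold} gives the lower bound $\mult_{\mathsf{p}}(D)>\tfrac1r$, while Lemma~\ref{lem:convex}(4) lets me replace $D$ by a boundary divisor $D_\mu$ whose support omits at least one coordinate curve through $\mathsf{p}$ without destroying the failure of log canonicity. This is exactly what makes a test curve $C$ available that is \emph{not} a component of $D_\mu$, so that $D_\mu\cdot C$ is a genuine nonnegative number bounding the orbifold multiplicity of $D_\mu$ at $\mathsf{p}$ from above. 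When $\mathsf{p}$ lies on a coordinate curve $D_j$ that is a component of $D_\mu$, I would instead feed $D_j$ into the orbifold adjunction inequality of Lemma~\ref{lem:adjuction}, which forces the remaining components to carry enough multiplicity at $\mathsf{p}$ to again exceed what the global intersection numbers permit.

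The decisive step is then purely numerical. For each surface one evaluates the intersection of $-K_S$ with the coordinate curves through $\mathsf{p}$ using the weighted complete-intersection formula $(-K_S)\cdot C=\mathcal{O}_S(1)\cdot\mathcal{O}_S(a_i)=a_i\,d_1 d_2/(a_0a_1a_2a_3a_4)$, and compares the resulting upper bound on $\mult_{\mathsf{p}}(D_\mu)$ with the lower bound $\tfrac1r$ (or $1$ at a smooth coordinate point). For every entry of the tables in Section~\ref{sec:appendix} other than $S_{2n,2n}$ and $S_{6,8}$ these two bounds are incompatible, yielding the contradiction and hence $\mathrm{lct}\ge 1$. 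I would organize this as a finite sweep over the list, grouping the surfaces according to which singular point the estimate must control and reusing the same two- or three-curve configurations wherever the weights permit.

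The main obstacle I anticipate is not any individual step but the uniformity and sharpness of this numerical sweep: the excluded families are precisely where the inequality $a_i\,d_1 d_2/(a_0a_1a_2a_3a_4)<\tfrac1r$ degenerates to an equality, so the estimates must be tuned to clear every other case while leaving exactly these two untouched. Concretely, at quotient points of large index $r$ a single coordinate curve can give too weak a bound, and one is then forced either to produce a sharper test curve — for instance a tangent member of a pencil, or the strict transform of $D_\mu$ after a weighted blow-up resolving the singularity — or to apply Lemma~\ref{lem:adjuction} to two coordinate curves simultaneously. Establishing that such a refinement is always available outside the two exceptional families, and confirming that it genuinely fails for $S_{2n,2n}$ and $S_{6,8}$ (consistent with their admitting anticanonical polar cylinders), is the technical heart of the argument.
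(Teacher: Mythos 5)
You should first note that the paper does not prove Theorem~\ref{thm:alpha2} at all: it is imported wholesale from \cite{KP2015,KW2019}, so there is no internal proof to compare against. Your outline does capture the method actually used in those references (and re-used in Sections~\ref{sec:absence1}--\ref{sec:absence2} of this paper for the residual cases): localize the non-log-canonical point by intersecting with test curves of small anticanonical degree, then work at the finitely many distinguished points by combining Lemmas~\ref{lem:multiplicity}, \ref{lem:multiplicity_orbifold}, \ref{lem:adjuction} with the convexity reduction of Lemma~\ref{lem:convex}.

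As a proof, however, the proposal has a genuine gap: essentially the entire content of the theorem is the finite sweep you defer to the final paragraph, and the uniform statements you assert in order to organize that sweep are not true as stated. The bound $\mult_{\mathsf p}(D)\le(-K_S)\cdot C$ at a general smooth point needs an irreducible curve $C\not\subset\supp(D)$ through $\mathsf p$ with $(-K_S)\cdot C\le 1$; for several entries of Table~\ref{table_complete intersection} with small weights this fails --- e.g.\ for \emph{No.27}, $(1,2,2,3,3,4,6)$, the system $|\mathcal O_S(1)|$ is the single curve $H_x$ and the moving system $|\mathcal O_S(2)|$ already has degree $4/3>1$, so one must run an Araujo-type argument as in Lemma~\ref{lem:supportM} on a larger linear system or blow up and track second-order multiplicities as in Case~1 of Lemma~\ref{lem:pw}; note that the lct there equals exactly $1$, so no slack is available. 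Likewise your claim that the excluded families are ``precisely where the inequality degenerates to an equality'' is inaccurate: $S_{6,8}$ has lct exactly $1$ when its sextic equation contains $yt$ (Table~\ref{table_complete intersection}), and the obstruction for $S_{2n,2n}$ is the reducible member $L_1+L_1'$ of $|\mathcal O_S(1)|$ meeting at the $\tfrac{1}{2n-1}$-point (Lemma~\ref{lem:LR}), not a single degenerate degree computation. Until the case-by-case estimates are actually carried out --- including the refinements you yourself flag as necessary at quotient points of large index --- the argument establishes a plausible strategy but not the theorem.
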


Furthermore, the absence of anticanonical polar cylinders in $S_{6}$, $S_{4}$, and $S_{3}$ is proved in \cite{Cheltsov2016}. They are listed as \emph{No.2, 17, 9} in Table \ref{table_hypersurface} which are isomorphic to smooth del Pezzo surfaces of degrees $1$, $2$, and ~$3$, respectively.

\begin{corollary}\label{cor:absence}
    Every quasi-smooth well-formed complete intersection log del Pezzo surface of index one other than $S_{10}$, $S_{15}$, $S_{6,8}$, $S_{2n,2n}$ does not contain any anticanonical polar cylinder.
\end{corollary}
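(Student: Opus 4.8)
The plan is to read this corollary as an assembly statement that combines the log canonical threshold criterion of Theorem \ref{thm:alpha1} with the lct computations recorded in Theorem \ref{thm:alpha2} and Table \ref{table_hypersurface}, handling the genuine codimension-two intersections and the hypersurfaces (into which the degenerate intersections collapse) separately. The guiding principle is that $\mathrm{lct}(S,-K_S)\geq 1$ already forbids an anticanonical polar cylinder, so only the finitely many surfaces with $\mathrm{lct}<1$ require individual attention.

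First I would invoke Theorem \ref{thm:alpha1}: for any surface $S$ on the two lists with $\mathrm{lct}(S,-K_S)\geq 1$, there is no $(-K_S)$-polar cylinder, and such $S$ need not be examined further. It therefore suffices to isolate exactly those surfaces whose lct is strictly less than one and to exclude cylinders on each of them by a separate argument. For the genuine codimension-two complete intersections $S_{d_1,d_2}$ in $\mathbb{P}(a_0,\dots,a_4)$ that are \emph{not} the intersection of a linear cone with a further hypersurface, Theorem \ref{thm:alpha2} asserts $\mathrm{lct}\geq 1$ with the sole exceptions $S_{2n,2n}$ in $\mathbb{P}(1,1,n,n,2n-1)$ and $S_{6,8}$ in $\mathbb{P}(1,2,3,4,5)$, both of which appear in the excluded list of the corollary. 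A complete intersection realized as a linear cone intersected with another hypersurface has one defining degree equal to a weight, so that equation eliminates the corresponding variable and the surface is isomorphic to a quasi-smooth well-formed hypersurface in a lower-dimensional weighted projective space; these cases are thus absorbed into the hypersurface analysis.

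For the hypersurfaces $S_d$ in $\mathbb{P}(a_0,a_1,a_2,a_3)$ I would read the lct values directly from Table \ref{table_hypersurface}. All but finitely many entries satisfy $\mathrm{lct}\geq 1$ and are dispatched by Theorem \ref{thm:alpha1}; the entries with $\mathrm{lct}<1$ are precisely $S_3$, $S_4$, $S_6$ (the surfaces isomorphic to smooth del Pezzo surfaces of degrees $3$, $2$, $1$) together with $S_{10}$ and $S_{15}$. The latter two are exactly the excluded hypersurfaces in the statement, while for $S_3$, $S_4$, $S_6$ the absence of an anticanonical polar cylinder is the theorem of \cite{Cheltsov2016} recalled in the paragraph preceding the corollary. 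Assembling these observations, every surface on either table other than $S_{10}$, $S_{15}$, $S_{6,8}$, and $S_{2n,2n}$ fails to admit an anticanonical polar cylinder.

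The main obstacle is not a single deep estimate but the exhaustiveness of the bookkeeping: one must verify from Tables \ref{table_hypersurface} and \ref{table_complete intersection} that no surface with $\mathrm{lct}<1$ has been overlooked, and in particular that each linear-cone complete intersection genuinely reduces to a hypersurface already present in Table \ref{table_hypersurface}, so that the reduction introduces no new low-threshold case beyond those already addressed by \cite{Cheltsov2016}. Once this cross-checking against the two tables is carried out, the corollary follows at once from Theorems \ref{thm:alpha1} and \ref{thm:alpha2} and the cited result.
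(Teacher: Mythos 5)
Your proposal is correct and follows essentially the same route as the paper: the corollary is presented there as an immediate consequence of Theorem \ref{thm:alpha1} applied to the lct values in Tables \ref{table_hypersurface} and \ref{table_complete intersection} (via Theorem \ref{thm:alpha2} for the codimension-two cases), together with the result of \cite{Cheltsov2016} for $S_3$, $S_4$, $S_6$, which are exactly the remaining surfaces whose lct can drop below one. Your additional remark that linear-cone complete intersections reduce to hypersurfaces already in Table \ref{table_hypersurface} is a sensible piece of bookkeeping that the paper leaves implicit in its reliance on the classifications of \cite{Paemurru2018} and \cite{KP2015}.
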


In conclusion, Corollary \ref{cor:absence} confirms the absence of anticanonical polar cylinders for most quasi-smooth well-formed complete intersection log del Pezzo surfaces of index one, leaving four cases for detailed consideration. The following sections will address these remaining cases to complete the classification of anticanonical polar cylindricity.

\begin{principal}    
    To attain a full understanding of the anticanonical polar cylindricity of these surfaces, it is necessary to consider four cases:

\begin{enumerate}
    \item[$\bullet$] $S_{10}$ in $\mathbb{P}(1,2,3,5)$;
   
    \item[$\bullet$] $S_{15}$ in $\mathbb{P}(1,3,5,7)$;
    
    \item[$\bullet$] $S_{6,8}$ in $\mathbb{P}(1,2,3,4,5)$;
    
    \item[$\bullet$] $S_{2n,2n}$ in $\mathbb{P}(1,1,n,n,2n-1)$,
    \end{enumerate}
\end{principal}

which correspond to \emph{No.10, 18} in Table \ref{table_hypersurface} and \emph{No.39, 38} in Table \ref{table_complete intersection}, respectively. The first three cases are discussed in this section, and the final case is considered in Section ~\ref{sec:absence2}. 

 \subsection{Additional absences}\label{subsec:absence} We prove that $S_{10}$, $S_{15}$, and $S_{6,8}$ do not contain any anticanonical polar cylinder. The main idea of the proof is to show that for a non-lc log pair $(S,D)$, the support of the hyperplane section $H_x$ has to be contained in the support of $D$.  Notably, the existence of a cylinder is not assumed until Lemma \ref{lem:support}. For the proof of Lemma \ref{lem:lc}, it is worthwhile to mention a result from \cite{Araujo2002}.

 \begin{lemma}[\hspace{1sp}{\cite[Corollary 3.7]{Araujo2002}}]\label{lem:supportM}
    Let $X$ be an anticanonically embedded quasi-smooth log del Pezzo surface of degree $d$ in $\mathbb{P}(a_0, a_1, a_2, a_3)$. Let $\pi_t : X \to \mathbb{P}(a_0, a_1, a_2)$ denote the projection from $\mathsf{p}_{_t} = (0: 0: 0: 1)$. Assume that $\pi_t$ has only finite fibers. If $H^0(\mathbb{P}, \mathcal{O}_{\mathbb{P}}(r))$ contains

\begin{itemize}
    \item[$\bullet$] at least two different monomials of the form $x^{i}y^{j}$,
    \item[$\bullet$] at least two different monomials of the form $x^{\ell}z^{m}$, 
\end{itemize}
    
    where $r$ is a positive integer and the four constants $i,j, \ell$ and $m$ are nonnegative integers, then for every $\mathsf{p} \in X \setminus (x = 0)$ and every effective $\mathbb{Q}$-divisor $D \sim_{\mathbb{Q}} -K_X$,
    there exists a divisor ~$F \in \vert \mathcal{O}_{X}(r) \vert$ that can be written as $F = F_0 + a(x = 0)$, where $a$ is a nonnegative integer, and $F_0$ is an effective Weil divisor passing through $\mathsf{p}$ and not containing any irreducible component of $D$. Hence, 
\begin{equation*}
    \dfrac{r d}{a_0 a_1 a_2 a_3} \geq \mathrm{mult}_{\mathsf{p}}(D).
\end{equation*}
\end{lemma}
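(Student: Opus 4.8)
The plan is to construct the divisor $F$ as a general member of the linear subsystem $\mathcal{M} \subseteq |\mathcal{O}_X(r)|$ spanned by the four given monomials, arrange that it pass through $\mathsf{p}$, and then read off the multiplicity bound from an intersection-number computation. Using $\mathsf{p} \notin (x=0)$, I would normalize the $x$-coordinate and write $\mathsf{p} = (1 : p_1 : p_2 : p_3)$. Let $\mathcal{M}$ be spanned by the two monomials of the form $x^i y^j$ and the two of the form $x^\ell z^m$. Since the two $y$-monomials are distinct, the two $z$-monomials are distinct, and the only monomial the two families can share is a pure power of $x$, we have $\dim_{\mathbb{C}} \mathcal{M} \geq 3$, so $\mathcal{M}$ has projective dimension at least $2$.

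First I would analyze the base locus of $\mathcal{M}$. Away from $(x=0)$, the simultaneous vanishing of the four monomials forces $y = z = 0$, so the base locus is contained in $\{x \neq 0\} \cap \{y = z = 0\} \cap X$, which is precisely the fibre $\pi_t^{-1}(\mathsf{p}_{_x})$. By hypothesis $\pi_t$ has only finite fibres, so this locus is finite; hence $\mathcal{M}$ has no fixed curve component other than possibly $(x=0)$. This is exactly where the finiteness assumption enters, and it is what makes both families of monomials necessary: using only the $y$-monomials would leave $\{y=0\}$ as a fixed curve, and similarly $\{z=0\}$ for the $z$-monomials alone. I expect this base-locus analysis, together with the behaviour at the coordinate point $\mathsf{p}_{_x}$, to be the main obstacle.

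Next I would impose passage through $\mathsf{p}$, a single linear condition, leaving a subsystem $\mathcal{M}_{\mathsf{p}}$ of projective dimension at least $1$, i.e. at least a pencil, whose base locus away from $(x=0)$ remains finite. For a general member $F \in \mathcal{M}_{\mathsf{p}}$ I would pull out the fixed multiple of $(x=0)$ and write $F = F_0 + a(x=0)$ with $a \geq 0$, so that $F_0$ carries no fixed curve component. Since $D$ has finitely many components $D_k$, and for each $D_k \neq (x=0)$ the members whose moving part contains $D_k$ form a proper, hence finite, subfamily of the pencil — otherwise $D_k$ would be a fixed component — a general $F$ produces an $F_0$ containing no component of $D$. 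As every member of $\mathcal{M}_{\mathsf{p}}$ passes through $\mathsf{p}$ and $\mathsf{p} \notin (x=0)$, we get $\mathsf{p} \in F_0$, as required; keeping $\mathcal{M}_{\mathsf{p}}$ positive-dimensional so that this general-position argument is legitimate is the other point requiring care.

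Finally I would conclude by intersection theory. Since $X$ has index one, $D \qsim -K_X = \mathcal{O}_X(1)$ and $F \sim \mathcal{O}_X(r)$, so $\mathcal{O}_X(1)^2 = d/(a_0a_1a_2a_3)$ gives $F \cdot D = rd/(a_0a_1a_2a_3)$. As $D$ is nef and $(x=0)$ is effective, $(x=0)\cdot D \geq 0$, whence $F \cdot D \geq F_0 \cdot D$. Because $F_0$ passes through $\mathsf{p}$ and shares no component with $D$, the local intersection at $\mathsf{p}$ yields $F_0 \cdot D \geq (F_0 \cdot D)_{\mathsf{p}} \geq \mult_{\mathsf{p}}(F_0)\,\mult_{\mathsf{p}}(D) \geq \mult_{\mathsf{p}}(D)$, which gives the stated bound; at the quotient singularities of $X$ this last inequality must be read with orbifold multiplicities, but the direction of the estimate is preserved.
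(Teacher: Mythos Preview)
The paper does not supply a proof of this lemma; it is quoted from \cite[Corollary~3.7]{Araujo2002} and used as a black box in the proof of Lemma~\ref{lem:lc}. Your argument is essentially correct and is the standard one: the linear system spanned by the four monomials has base locus contained in $(x=0)\cup\pi_t^{-1}(\mathsf{p}_{_x})$, hence no fixed curve outside $(x=0)$; after imposing passage through $\mathsf{p}$ one still has at least a pencil, whose general member avoids the finitely many components of $D$; and the intersection computation with the nefness of $D\sim_{\mathbb Q}-K_X$ gives the bound.

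Two minor points are worth tidying. First, you should note that the monomials remain linearly independent after restriction to $X$; this is automatic when $r<d$ (as in all the applications here), since then the restriction map $H^0(\mathbb{P},\mathcal{O}_{\mathbb{P}}(r))\to H^0(X,\mathcal{O}_X(r))$ is injective. Second, writing the fixed part of $\mathcal{M}_{\mathsf p}$ as an integral multiple $a\,(x=0)$ tacitly assumes $(x=0)$ is irreducible on $X$, which is not among the hypotheses (though it holds in the paper's applications, cf.\ Lemma~\ref{lem:support}). This is cosmetic for the multiplicity bound: decompose a general $F\in\mathcal{M}_{\mathsf p}$ into its fixed and moving parts, observe the fixed part is supported on $(x=0)$ and so misses $\mathsf{p}$, and run your intersection estimate with the moving part in place of $F_0$. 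Since the fixed part is effective and $D$ is nef, $F\cdot D\ge F_{\mathrm{mov}}\cdot D\ge \mult_{\mathsf p}(D)$ exactly as you wrote.
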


As a first step, we examine the non-lc locus of the log pair $(S,D)$.

 \begin{lemma}\label{lem:lc}
    The log pair $(S, D)$ is log canonical along $S\setminus H_x$.
\end{lemma}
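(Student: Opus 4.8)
The plan is to reduce the statement to a bound on the local multiplicity of $D$ at points off $H_x$ and then contradict the failure of log canonicity. The decisive preliminary observation is that, for each of $S_{10}$, $S_{15}$, and $S_{6,8}$, \emph{every} singular point of $S$ lies on $H_x$. Indeed, the singular locus of the ambient weighted projective space consists only of the coordinate vertices $\mathsf{p}_{_y},\mathsf{p}_{_z},\mathsf{p}_{_t}$ (and $\mathsf{p}_{_w}$ in the five-variable case), together with the one singular edge $\{x=z=w=0\}$ arising from $\gcd(2,4)=2$ in $\mathbb{P}(1,2,3,4,5)$; all of these strata have vanishing $x$-coordinate, so $S\setminus H_x$ is contained in the smooth locus of $S$. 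By Lemma \ref{lem:multiplicity} it therefore suffices to prove that $\mult_{\mathsf{p}}(D)\le 1$ for every $\mathsf{p}\in S\setminus H_x$, since a smooth non-log-canonical point would force $\mult_{\mathsf{p}}(D)>1$.

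For the two hypersurfaces this bound follows directly from Lemma \ref{lem:supportM}, applied to the projection $\pi_t$ from $\mathsf{p}_{_t}$ onto $\mathbb{P}(a_0,a_1,a_2)$ with the least admissible degree $r$. For $S_{10}\subset\mathbb{P}(1,2,3,5)$ one takes $r=3$, with monomials $\{x^3,xy\}$ and $\{x^3,z\}$, giving $\mult_{\mathsf{p}}(D)\le\tfrac{3\cdot10}{1\cdot2\cdot3\cdot5}=1$; for $S_{15}\subset\mathbb{P}(1,3,5,7)$ one takes $r=5$, with $\{x^5,x^2y\}$ and $\{x^5,z\}$, giving $\mult_{\mathsf{p}}(D)\le\tfrac{5\cdot15}{1\cdot3\cdot5\cdot7}=\tfrac57<1$. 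I would first check the finite-fiber hypothesis: for $S_{10}$ the monomial $t^2$ occurs, so $\mathsf{p}_{_t}\notin S_{10}$ and $\pi_t$ is a finite morphism; for $S_{15}$ the point $\mathsf{p}_{_t}$ lies on $S_{15}$ but inside $H_x$, so the rational projection is a morphism with finite fibers on $S_{15}\setminus\{\mathsf{p}_{_t}\}\supset S_{15}\setminus H_x$, which is all that the argument uses. In both cases $\mult_{\mathsf{p}}(D)\le 1$ contradicts $\mult_{\mathsf{p}}(D)>1$, so $(S,D)$ is log canonical off $H_x$.

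The complete intersection $S_{6,8}\subset\mathbb{P}(1,2,3,4,5)$ is the real obstacle, since Lemma \ref{lem:supportM} is not available and the naive single-degree estimate is too weak: one has $(-K_{S_{6,8}})^2=\tfrac{48}{120}=\tfrac25$, and the smallest degree carrying two monomials of each type $x^iy^j$ and $x^\ell z^m$ is $r=3$, which would only give $\mult_{\mathsf{p}}(D)\le 3\cdot\tfrac25=\tfrac65>1$. Instead I would use the single pencil $\mathscr{P}=\langle x^2,y\rangle$, which is all of $|{-2K_{S_{6,8}}}|$ and whose base locus $\{x=y=0\}$ lies in $H_x$. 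For $\mathsf{p}\notin H_x$ the member $F\in\mathscr{P}$ through $\mathsf{p}$ satisfies $F\cdot D=2(-K_{S_{6,8}})^2=\tfrac45$, so if $F$ and $D$ share no component then $\mult_{\mathsf{p}}(D)\le(F\cdot D)_{\mathsf{p}}\le F\cdot D=\tfrac45<1$, and we are done by Lemma \ref{lem:multiplicity}.

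The step I expect to be hardest is the remaining case, where $F$ shares a component with $D$. Here I would invoke the convexity Lemma \ref{lem:convex} with $T=\tfrac12 F\qsim D$: after verifying $\mult_{\mathsf{p}}(F)\le 2$ so that $(S_{6,8},T)$ is log canonical at $\mathsf{p}$, the lemma yields an effective $D_\mu\qsim D$, still not log canonical at $\mathsf{p}$, whose support drops a component of $F$; iterating over the finitely many components of $F$ returns us to the disjoint-support case and the bound $\tfrac45<1$. The delicate points, where most of the bookkeeping will reside, are the finiteness of fibers for the two hypersurface projections, the control of $\mult_{\mathsf{p}}(F)$ that keeps $T$ log canonical, and arranging at each stage that $\supp(T)\subseteq\supp(D)$ so that Lemma \ref{lem:convex} genuinely applies.
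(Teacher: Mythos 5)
Your proposal is correct and follows essentially the same route as the paper: the same linear systems ($x^3,z,xy$ for $S_{10}$, $x^5,z,x^2y$ for $S_{15}$ via Lemma \ref{lem:supportM}, and the pencil $\langle x^2,y\rangle$ for $S_{6,8}$), the same intersection numbers $1$, $\tfrac{5}{7}$, $\tfrac{4}{5}$, and the same use of Lemmas \ref{lem:multiplicity} and \ref{lem:convex}. The one difficulty you flag for $S_{6,8}$ (a member $F$ sharing components with $D$) is resolved in the paper by observing that $F$ is in fact irreducible with $\mult_{\mathsf{p}}F\le 2$, so that $\left(S_{6,8},\tfrac12 F\right)$ is log canonical and Lemma \ref{lem:convex} applies directly with $T=\tfrac12 F$, avoiding any iteration over components.
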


\begin{proof} By a suitable coordinate change, we may assume that the surfaces are defined by a quasi-homogeneous polynomials as follows:
    \begin{itemize}
        \item[$\bullet$] $S_{10} \subset \mathbb{P}(1,2,3,5)$ is defined by 
        \begin{equation*}
            x f_9(x,y,z,t)+y^5+y^2z^2+yzt+t^2=0,
        \end{equation*}
    where $f_9$ is a quasi-homogeneous polynomial of degree $9$.
    
        \item[$\bullet$] $S_{15} \subset \mathbb{P}(1,3,5,7)$ is defined by
        \begin{equation*}
            x f_{14}(x,y,z,t)+y^5+yzt+z^3=0,
        \end{equation*}
    where $f_{14}$ is a quasi-homogeneous polynomial of degree $14$.

        \item[$\bullet$] $S_{6,8} \subset \mathbb{P}(1,2,3,4,5)$ is defined by
        \begin{equation*}
            x f_5(x,y,z,t)+y(y^2+at)+z^2+xw=0, \ x f_7(x,y,z,t,w)+t(by^2+t)+zw=0, 
        \end{equation*}
    where $ab\neq 1$ and $f_5, f_7$ are quasi-homogeneous polynomials of degree $5, 7$, respectively. 
    \end{itemize}

    \textbf{Case 1.} Suppose that the log pair $(S_{10},D)$ is not log canonical at $\mathsf{p} \in S_{10} \setminus H_x$.\\ 
    Then $S_{10}$ is smooth at $\mathsf{p}$. Let $\mathscr{M}$ be the linear system defined by the monomials
    \begin{equation*}
        x^{3},\ z,\ xy.
    \end{equation*}

    By Lemma \ref{lem:supportM}, there exists a member $M \in \mathscr{M}$ such that $\mathsf{p} \in M$ and $M \nsubseteq \supp(D)$.  We have
    \begin{equation*}
        1= M \cdot D  \geq \mult_{\mathsf{p}} (D) > 1,
    \end{equation*}
    which is a contradiction.\\ 
     
    \textbf{Case 2.} Suppose that the log pair $(S_{15},D)$ is not log canonical at $\mathsf{p} \in S_{15} \setminus H_x$.\\ 
    Then $S_{15}$ is smooth at $\mathsf{p}$. Let $\mathscr{M}$ be the linear system defined by the monomials
    \begin{equation*}
        x^{5},\ z,\  x^{2}y.
    \end{equation*}
    By Lemma \ref{lem:supportM}, there exists a member $M \in \mathscr{M}$ such that $\mathsf{p} \in M$ and $M \nsubseteq \supp(D)$. We have
    \begin{equation*}
        \frac{5}{7}= M \cdot D  \geq \mult_{\mathsf{p}} (D) > 1,
    \end{equation*}
    which is a contradiction.\\ 
    
    \textbf{Case 3.} Suppose that the log pair $(S_{6,8},D)$ is not log canonical at ~$\mathsf{p} \in S_{6,8} \setminus H_x$.\\ 
    Then $S_{6,8}$ is smooth at $\mathsf{p}$. Let $\mathscr{M}$ be the linear system defined by the monomials
    \begin{equation*}
        x^2,\ y.
    \end{equation*}
    Then there exists a member $M \in \mathscr{M}$ such that $\mathsf{p} \in M$ and $M$ is defined by the quasi-homogeneous polynomials 
    \begin{equation*}
        y-\alpha x^2 = xf_5(x,y,z,t) + y(y^2 + at) + z^2 + xw = xf_7(x,y,z,t,w) + t(by^2 + t) + zw= 0,
    \end{equation*}
    hence, $M$ is irreducible with $\mult_{\mathsf{p}}M \leq 2$. This implies that the log pair $(S_{6,8},\frac{1}{2}M)$ is log canonical at ~$\mathsf{p}$. By Lemma \ref{lem:convex}, we may assume that $M \nsubseteq \supp{D}$. We have
    \begin{equation*}
        \frac{4}{5}= M \cdot D \geq \mult_{\mathsf{p}} (D) > 1,
    \end{equation*}
    
    which is also a contradiction. 
    \end{proof}

\begin{lemma}{\label{lem:support}}
    If the log pair $(S,D)$ is not log canonical at $\mathsf{p}$, then the support of $H_x$ is contained in the support of $D$.
\end{lemma}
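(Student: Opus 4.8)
The plan is to argue by contradiction, exploiting that $H_x$ is an anticanonical curve of very small self-intersection. Since each surface has index one and $a_0=1$, the section $H_x\in|\mathcal{O}_S(1)|$ satisfies $H_x\qsim -K_S\qsim D$, so that $H_x\cdot D=(-K_S)^2$; computing the weighted degree $\tfrac{d}{\prod a_i}$ gives $(-K_S)^2=\tfrac13,\tfrac17,\tfrac25$ for $S_{10}, S_{15}, S_{6,8}$ respectively, each strictly below one. From the normal forms in Lemma \ref{lem:lc} one checks that $H_x$ is irreducible and reduced, so the desired conclusion $\supp(H_x)\subseteq\supp(D)$ amounts to saying that $H_x$ is a component of $D$. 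Assume it is not; then $H_x\cdot D$ decomposes as a sum of non-negative local intersection numbers, whence $(H_x\cdot D)_\mathsf{p}\leq(-K_S)^2$ at every point. By Lemma \ref{lem:lc} the assumed non-lc point $\mathsf{p}$ lies on $H_x$, and the whole point is to contradict $(H_x\cdot D)_\mathsf{p}\leq(-K_S)^2$.

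When $\mathsf{p}$ is a smooth point of $S$ this is immediate: Lemma \ref{lem:multiplicity} gives $\mult_\mathsf{p}(D)>1$, so that $(H_x\cdot D)_\mathsf{p}\geq\mult_\mathsf{p}(H_x)\,\mult_\mathsf{p}(D)\geq\mult_\mathsf{p}(D)>1>(-K_S)^2$, a contradiction. The real work is therefore confined to the orbifold points of $S$ lying on $H_x$. A vertex-by-vertex inspection of the three equations (using $ab\neq 1$ in the $S_{6,8}$ case to exclude singularities on the $\tfrac12$-stratum) shows that each surface carries a single singular point and that it lies on $H_x$: namely $\mathsf{p}_{_z}$ of type $\tfrac13$ on $S_{10}$, $\mathsf{p}_{_t}$ of type $\tfrac17$ on $S_{15}$, and $\mathsf{p}_{_w}$ of type $\tfrac15$ on $S_{6,8}$.

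At such a point $\mathsf{p}$ of type $\tfrac1r$, I would pass to the smooth cyclic cover $\pi\colon(\mathbb{C}^2,0)\to(S,\mathsf{p})$ and set $\tilde H_x=\pi^*H_x$, $\tilde D=\pi^*D$. Reading the local equation of $H_x$ off the normal forms shows that its tangent cone at $\mathsf{p}$ is a product of two distinct linear forms, so $\mult_0(\tilde H_x)\geq 2$, while Lemma \ref{lem:multiplicity_orbifold} yields $\mult_0(\tilde D)>1$. Combining these with the orbifold local intersection formula,
\begin{equation*}
    (H_x\cdot D)_\mathsf{p}=\frac{1}{r}\bigl(\tilde H_x\cdot\tilde D\bigr)_0\geq\frac{1}{r}\,\mult_0(\tilde H_x)\,\mult_0(\tilde D)>\frac{2}{r},
\end{equation*}
and since $\tfrac2r\geq(-K_S)^2$ in each case, this contradicts $(H_x\cdot D)_\mathsf{p}\leq(-K_S)^2$.

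The main obstacle is exactly this orbifold estimate, which rests on two delicate points: that $H_x$ is genuinely singular (multiplicity at least two) at the quotient point, and the sharp numerics. The case $S_{6,8}$ is the tightest, since there $\tfrac2r=(-K_S)^2=\tfrac25$, so the argument survives only because the bound $\mult_0(\tilde D)>1$ is strict. I would therefore verify the tangent-cone computation at $\mathsf{p}_{_w}$ with care (the coefficient $a$ governs whether the local multiplicity of $H_x$ is $2$ or $3$) and confirm irreducibility of $H_x$ for the complete intersection $S_{6,8}$, reducibility being the only scenario that would require a preliminary reduction via the convexity Lemma \ref{lem:convex}.
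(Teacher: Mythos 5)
Your proof is correct and follows essentially the same route as the paper's: assume $H_x\not\subset\supp(D)$, note $H_x$ is irreducible, and contradict $H_x\cdot D=(-K_S)^2\in\{\tfrac13,\tfrac17,\tfrac25\}$ by the local bound $\mult_{\mathsf{p}}(H_x)\cdot\mult_{\mathsf{p}}(D)$, using $\mult_{\mathsf{p}}(D)>1$ at smooth points and the orbifold multiplicity bound combined with $\mult_{\mathsf{p}}(H_x)\geq 2$ at the unique quotient singularity. The only difference is that you make explicit the cyclic-cover and tangent-cone computations (and the sharpness of the $S_{6,8}$ case) that the paper leaves implicit.
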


\begin{proof}
    We can see that the hyperplane section $H_x$ is irreducible since it is defined by 
    \begin{itemize}
        \item[$\bullet$] $y^5+y^{2}z^{2}+yzt+t^2 = 0$ if $S = S_{10}$;
        \item[$\bullet$] $y^5+yzt+z^3 = 0$ if $S = S_{15}$;
        \item[$\bullet$] $y(y^2+at)+z^2 = t(by^2+t)+zw = 0$ if $S = S_{6,8}$.
    \end{itemize}
    Suppose that $H_x \not\subset \supp(D)$. If $S = S_{10}$, then since $S$ has the unique singular point $\mathsf{p}_{_z}$, we have
    \begin{equation*}
    \dfrac{1}{3}=H_x\cdot D \geq \mult_{\mathsf{p}}(H_x)\cdot \mult_{\mathsf{p}}(D)> \left\{
        \begin{array}{ll}
        1,\ \mathrm{if} \ \mathsf{p} \ \mathrm{is \ smooth},\\\\   
        \dfrac{2}{3}, \ \mathrm{if} \ \mathsf{p}=\mathsf{p}_{_z}.\\
        \end{array} \right.
    \end{equation*}
    If $S = S_{15}$, then since $S$ has the unique singular point $\mathsf{p}_{_t}$, we have
    \begin{equation*}
    \dfrac{1}{7}=H_x\cdot D \geq \mult_{\mathsf{p}}(H_x)\cdot \mult_{\mathsf{p}}(D)> \left\{
        \begin{array}{ll}
        1,\ \mathrm{if} \ \mathsf{p} \ \mathrm{is \ smooth},\\\\   
        \dfrac{2}{7}, \ \mathrm{if} \ \mathsf{p}=\mathsf{p}_{_t}.\\
        \end{array} \right.
    \end{equation*}
   If $S = S_{6,8}$, then since $S$ has the unique singular point $\mathsf{p}_{_w}$, we have
    \begin{equation*}
    \dfrac{2}{5}=H_x\cdot D \geq \mult_{\mathsf{p}}(H_x)\cdot \mult_{\mathsf{p}}(D)> \left\{
        \begin{array}{ll}
        1,\ \mathrm{if} \ \mathsf{p} \ \mathrm{is \ smooth},\\\\   
        \dfrac{2}{5}, \ \mathrm{if} \ \mathsf{p}=\mathsf{p}_{_w}.\\
        \end{array} \right.
    \end{equation*}
    The above inequalities imply contradictions.
\end{proof}

From now on, we additionally assume that $S$ admits an anticanonical polar cylinder. This means that there exists an effective $\mathbb{Q}$-Cartier divisor $D \sim_{\mathbb{Q}} -K_{S}$ such that the Zariski open subset 
\begin{equation*}
    S \setminus \supp(D)
\end{equation*}
is isomorphic to $\mathbb{A}^1 \times Z$ for some affine curve $Z$. It is also said that $D$ is a boundary of the cylinder. Then we have the following commutative diagram:
\begin{equation}\label{eq:pencil}
    \begin{tikzcd}[row sep=3em, column sep=3em]
        \mathbb{A}^1 \times Z \cong U \arrow[dd, "p_2"] \arrow[rr, hook] & & S \arrow[dd, dashed, "\rho"] & & W \arrow[ll, "\psi"'] \arrow[dd, "\sigma"] \arrow[ddll, "\widetilde{\rho}"] \\
        & & & & \\
        Z \arrow[rr, hook] & & \mathbb{P}^1 & & \mathbb{F}_1 \arrow[ll]
    \end{tikzcd}
    \end{equation}

Here, $p_2$ is the projection to the second factor, $\rho$ is the induced rational map with the corresponding pencil $\mathscr{L}$ and, $\tilde{\rho}$ is a morphism obtained by resolving the indeterminacy of $\rho$, if any.

\begin{lemma}\label{lem:baseptfree}
The base locus of the pencil $\mathscr{L}$ cannot be empty. 
\end{lemma}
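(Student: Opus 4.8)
The plan is to argue by contradiction and reduce the statement to the numerical geometry of $S$. Suppose that the base locus $\mathrm{Bs}(\mathscr{L})$ is empty. Then $\rho$ has no point of indeterminacy, so $\rho\colon S \to \mathbb{P}^1$ is a genuine morphism and $\mathscr{L}$ is a base-point-free pencil whose general members are the fibers of $\rho$. Two fibers $F_1, F_2$ over distinct points of $\mathbb{P}^1$ are linearly equivalent, being pullbacks of linearly equivalent points on $\mathbb{P}^1$, and they are set-theoretically disjoint; moreover, since the unique singular point of $S$ lies on a single fiber, I may choose $F_1, F_2$ to avoid it. Hence $F_1 \cdot F_2 = 0$, and writing $F$ for their common numerical class I obtain $F^2 = 0$.

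I would then invoke the fact that each of $S_{10}$, $S_{15}$, and $S_{6,8}$ has Picard number one: its divisor class group is generated over $\mathbb{Q}$ by $\mathcal{O}_S(1) = -K_S$, as follows from the descriptions of these quasi-smooth well-formed surfaces in \cite{Paemurru2018, KP2015}. Since a general member $F$ of $\mathscr{L}$ is a nonzero effective divisor and $-K_S$ is ample, this forces $F \sim_{\mathbb{Q}} \lambda(-K_S)$ for some $\lambda \in \mathbb{Q}_{>0}$. Consequently $F^2 = \lambda^2 (-K_S)^2$, and because $(-K_S)^2 > 0$ for the del Pezzo surface $S$ — indeed $(-K_S)^2 = \tfrac{1}{3}, \tfrac{1}{7}, \tfrac{2}{5}$ for $S_{10}, S_{15}, S_{6,8}$ respectively, as recorded in the proof of Lemma \ref{lem:support} — we get $F^2 > 0$. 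This contradicts $F^2 = 0$, so $\mathrm{Bs}(\mathscr{L})$ must be nonempty.

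The only real point requiring care is the Picard-number-one property, which underlies the proportionality $F \sim_{\mathbb{Q}} \lambda(-K_S)$; I would establish it by citing the class-group computations in the classification of these surfaces. A secondary technical point is to ensure the self-intersection computation is unaffected by the cyclic quotient singularity, which I handle by choosing the two general fibers to miss the singular point so that their intersection number is honestly zero. Equivalently, one can run the disjointness argument on the resolution $W$ of \eqref{eq:pencil}, where $\widetilde{\rho}\colon W \to \mathbb{P}^1$ is already a morphism and, under the contradiction hypothesis, $\psi$ is an isomorphism.
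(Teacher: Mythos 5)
Your argument has a fatal gap: the claim that $S_{10}$, $S_{15}$, and $S_{6,8}$ have $\mathbb{Q}$-divisor class group of rank one is false, and neither \cite{Paemurru2018} nor \cite{KP2015} asserts it. The Lefschetz-type theorem giving a cyclic class group for quasi-smooth hypersurfaces in weighted projective space applies only in dimension at least three; for surfaces it fails badly (the smooth cubic in $\mathbb{P}^3$, \emph{No.9} of Table~\ref{table_hypersurface}, has Picard rank $7$). Concretely, $S_{10}\subset\mathbb{P}(1,2,3,5)$ has $(-K)^2=\tfrac{1}{3}$ and a single $\tfrac{1}{3}(1,1)$ point, so its minimal resolution $\widetilde S$ satisfies $K_{\widetilde S}^2=0$ and $\rho(\widetilde S)=10$, whence $\rho(S_{10})=9$; analogous computations give $\rho>1$ for $S_{15}$ and $S_{6,8}$. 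Once $\rho(S)\geq 2$, the step $F\sim_{\mathbb{Q}}\lambda(-K_S)$ collapses and $F^2=0$ is perfectly consistent: these are rational surfaces carrying many base-point-free pencils of rational curves, so no argument that uses only the existence of the pencil, and not the cylinder structure, can possibly succeed.

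The paper's proof is of a different nature and genuinely uses the boundary $D=\sum\lambda_iC_i$: if $\mathscr{L}$ were base point free, the induced $\mathbb{P}^1$-fibration would have a section $C_1$ contained in $\supp(D)$ (the point at infinity of each fiber $\mathbb{A}^1$ of the cylinder), and intersecting $K_S+D\sim_{\mathbb{Q}}0$ with a general fiber $F$ gives $-2+\lambda_1=0$, i.e.\ $\lambda_1=2$. Hence $(S,D)$ fails to be log canonical along $C_1$, so $C_1\subset H_x$ by Lemma~\ref{lem:lc}, and a convexity manipulation removing $H_x$ from the support produces a new effective boundary $D^*\sim_{\mathbb{Q}}-K_S$ with $(K_S+D^*)\cdot F=-2\neq 0$, a contradiction. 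To repair your proposal you would need to replace the Picard-rank-one input by precisely this kind of use of the cylinder; as written, the argument does not establish the lemma.
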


\begin{proof}
Suppose that the linear system $\mathscr{L}$ induced by the cylinder with boundary 
\begin{equation*}
    D=\sum_{i=1}^{r} \lambda_i C_i
\end{equation*}

defines a conic bundle $\rho$ with section $C_1$. Then for a generic fiber $F$ of the pencil, we have 
\begin{equation*}
    -2+\lambda_1 =\left(K_{S}+\sum_{i=1}^{r} \lambda_i C_i \right)\cdot F = (K_{S}+D)\cdot F = 0.
\end{equation*}

This implies that $(S,D)$ is not log canonical along $C_1$ with $\lambda_1=2$ and by Lemma \ref{lem:lc}, 
\begin{equation*}
    C_1 \subset \supp(H_x).
\end{equation*}

Since $H_x$ is irreducible, we may write $H_x = \mu C_1$ for some $\mu > 2$. Then $H_x$ is not contained in the support of an effective $\mathbb{Q}$-divisor 
\begin{equation*}
    D^{\ast}=\dfrac{\mu}{\mu-2}D-\dfrac{2}{\mu-2}H_x = \sum_{i=2}^{r} \lambda^{\ast}_i C_i \sim_{\mathbb{Q}} -K_{S},
\end{equation*}
where $\lambda^{\ast}_i = \dfrac{\mu \lambda_i}{\mu-2}$. We have
\begin{equation*}
    -2=\left(K_{S}+\sum_{i=2}^{r} \lambda^{\ast}_i C_i\right)\cdot F = (K_{S}+D^{\ast})\cdot F = 0,
\end{equation*}
which is a contradiction.
\end{proof}

\begin{theorem}\label{thm:absence}
 $S$ does not contain any anticanonical polar cylinder.
\end{theorem}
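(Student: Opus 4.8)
The plan is to argue by contradiction, assuming that $S$ carries an anticanonical polar cylinder with boundary $D\sim_{\mathbb{Q}}-K_S$ and associated pencil $\mathscr{L}$, and then to chain the three preceding lemmas. By Lemma~\ref{lem:baseptfree} the base locus of $\mathscr{L}$ is nonempty, so I fix a base point $\mathsf{b}$. Applying Lemma~\ref{lem:bdry} with $H=D$ gives that $(S,D)$ is not log canonical at $\mathsf{b}$; Lemma~\ref{lem:lc} then places $\mathsf{b}$ on $H_x$, and Lemma~\ref{lem:support} yields $\supp(H_x)\subset\supp(D)$. Since $H_x$ is irreducible I write $D=\lambda H_x+\Omega$ with $\lambda>0$, $\Omega\geq0$ effective and $H_x\not\subset\supp(\Omega)$; intersecting with the ample curve $H_x$ gives $0\le \Omega\cdot H_x=(1-\lambda)H_x^2$, so $\lambda\le1$, with $\lambda<1$ as soon as $\Omega\neq0$.

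The core of the argument is to peel $H_x$ off the boundary whenever $\Omega\neq0$, i.e. whenever $\supp(D)\supsetneq H_x$. In that case I invoke the convexity Lemma~\ref{lem:convex} with $T=H_x$: as $H_x\sim_{\mathbb{Q}}-K_S\sim_{\mathbb{Q}}D$ and $H_x\neq D$, the family $D_\varepsilon=(1+\varepsilon)D-\varepsilon H_x$ stays effective up to a maximal $\mu$, at which the coefficient of $H_x$ vanishes, producing $D^{\ast}:=D_\mu=(1+\mu)\Omega$. This $D^{\ast}$ is an effective $\mathbb{Q}$-divisor with $D^{\ast}\sim_{\mathbb{Q}}-K_S$, $H_x\not\subset\supp(D^{\ast})$ and $\supp(D^{\ast})\subset\supp(\Omega)\subset\supp(D)$. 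Because $H_x\not\subset\supp(D^{\ast})$, Lemma~\ref{lem:support} applied to $D^{\ast}$ (its proof uses only the intersection estimates, not the cylinder) forces $(S,D^{\ast})$ to be log canonical at every point; yet $\supp(D^{\ast})\subset\supp(D)$ lets me apply Lemma~\ref{lem:bdry} with $H=D^{\ast}$ to conclude that $(S,D^{\ast})$ is not log canonical at $\mathsf{b}$. This contradiction settles every case in which the boundary has a component other than $H_x$.

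There remains the degenerate case $D=H_x$, where the boundary is the single irreducible anticanonical curve. Here Lemma~\ref{lem:bdry} with $H=H_x$ says $(S,H_x)$ is not log canonical at $\mathsf{b}$, so it suffices to prove that $(S,H_x)$ is in fact log canonical. A Jacobian computation on the normal forms of Lemma~\ref{lem:lc} shows that $H_x$ is smooth away from the unique singular point of $S$, reducing the question to that point. Passing to the cyclic cover uniformizing the quotient singularity---which is quasi-\'etale, hence preserves log canonicity---the pulled-back curve is $y^2+yt+t^2+\cdots$ for $S_{10}$ and $yz+\cdots$ for $S_{15}$, both ordinary nodes, so $(S,H_x)$ is log canonical and we are done.

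The step I expect to be the real obstacle is this last local analysis for $S_{6,8}$. After eliminating the auxiliary coordinates at $\mathsf{p}_{_w}$ the pulled-back curve becomes $y^3+ayt+(by^2t+t^2)^2+\cdots$, which is a node exactly when $a\neq0$ but degenerates to the non-log-canonical singularity $y^3+t^4$ when $a=0$; in that sub-case $(S,H_x)$ is genuinely not log canonical at $\mathsf{p}_{_w}$, so the boundary $D=H_x$ cannot be excluded by the reasoning above and demands a direct study of the pencil $\mathscr{L}$. The route I would pursue is the resolution $\widetilde{\rho}\colon W\to\mathbb{F}_1$ of \eqref{eq:pencil}: one must show that a base point concentrated at $\mathsf{b}=\mathsf{p}_{_w}$ is incompatible with the induced fibration on $W$---for instance by tracking how a general fiber of $\widetilde{\rho}$ meets the strict transform of $H_x$ over the orbifold point---and it is reconciling this birational geometry with the worse-than-nodal singularity at $\mathsf{p}_{_w}$ that constitutes the hard part.
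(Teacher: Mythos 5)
The first half of your argument is the paper's proof: non-log-canonicity of $(S,D)$ at the base point $\mathsf{b}$ (the paper derives this from the discrepancy computation $\mu_1=2$ on the resolution $W$, which is exactly what Lemma~\ref{lem:bdry} packages), followed by peeling $H_x$ off via Lemma~\ref{lem:convex} with $T=H_x$ and reapplying Lemma~\ref{lem:bdry} to the resulting $D^{\ast}$, contradicting Lemma~\ref{lem:support}. That part is correct and is essentially the intended argument.

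The gap is the residual case $D=H_x$, which you correctly isolate (Lemma~\ref{lem:convex} needs $T\neq D$) but do not close: for $S_{6,8}$ with $a=0$ the pair $(S_{6,8},H_x)$ really is non-log-canonical at $\mathsf{p}_{_{w}}$ (your local computation is consistent with the value $\frac{7}{12}$ in Table~\ref{table_complete intersection}), so your plan of showing $(S,H_x)$ log canonical cannot work there, and the ``direct study of the pencil'' you sketch is left unexecuted; as written the proof is incomplete. The case is, however, vacuous, and one observation closes it: the complement of the boundary is $\mathbb{A}^1\times Z$ with $Z$ a smooth rational affine curve, so its divisor class group is trivial, and the excision sequence for class groups shows that the irreducible components of $D$ must span $\operatorname{Cl}(S)\otimes\mathbb{Q}$. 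Each of $S_{10}$, $S_{15}$, $S_{6,8}$ has a single quotient singular point and $K_S^2<1$, so its minimal resolution is a smooth rational surface with $K^2\leq 0$ and at most six exceptional curves, whence $\rho(S)\geq 2$ (in fact much larger); therefore $D$ has at least two components and $\Omega\neq 0$ always. With that remark your third and fourth paragraphs, including the unresolved $S_{6,8}$ sub-case, can be deleted.
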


\begin{proof}
    Suppose that $S$ admits a $(-K_{S})$-polar cylinder defined by an effective $\mathbb{Q}$-divisor
    \begin{equation*}
        D=\sum_{i=1}^{r} \lambda_i C_i.
    \end{equation*}

    Let $\mathsf{p}$ be the base point of the corresponding linear system. Then for a resolution of indeterminacy $\tilde{\rho} : W \to \mathbb{P}^1$ at $\mathsf{p}$, let
    \begin{equation*}
        K_{W}+\sum_{i=1}^{r} \lambda_i \widetilde{C_i}+\sum_{j=1}^{m} \mu_j E_j=\tilde{\rho}^{\ast}(K_{S}+D),
    \end{equation*}

    where $E_1$ is the section in $W$ with $(E_1)^2=-1$. Then for a birational transform $\widetilde{F}$ of $F$ in $W$, we have
    \begin{equation*}
        -2+\mu_1=\left( K_{W}+\sum_{i=1}^{r} \lambda_i \widetilde{C_i}+\sum_{j=1}^{m} \mu_j E_j\right)\cdot \widetilde{F}=\tilde{\rho}^{\ast}(K_{S}+D)\cdot\widetilde{F}=0.
    \end{equation*}

    This implies that the log pair $(S,D)$ is not log canonical at $\mathsf{p}$. Meanwhile, by Lemmas \ref{lem:convex} and \ref{lem:bdry}, we have an effective $\mathbb{Q}$-divisor $D' \sim_{\mathbb{Q}}-K_{S}$ such that the log pair $(S,D')$ is not log canonical at $\mathsf{p}$ and 
    \begin{equation*}
        H_x \not\subset \supp(D'),
    \end{equation*}
    which contradicts Lemma \ref{lem:support}.
\end{proof}


\section{Cylindricity of Complete Intersection of degree 2n}\label{sec:absence2} 
Now we study the anticanonical polar cylindricity of the surface $S_{2n,2n}$ in $\mathbb{P}(1,1,n,n,2n-1)$. As a generalization of a smooth del Pezzo surface of degree $4$ corresponding to $n=1$, we construct a birational morphism from the surface $S_{2n,2n}$ to the weighted projective plane $\mathbb{P}(1,1,2n-1)$. This allows us to understand the cylindricity depending on the positive integer $n$. First, we introduce a result on a linear system of the surface in \cite{KP2015} without assumption on cylinder.

\begin{lemma}[\hspace{1sp}{\cite[Lemma 6.1]{KP2015}}]\label{lem:LR}
    Let $\mathscr{C}$ be the linear system on $S_{2n,2n}$ defined by the monomials ~$x, y$. 
    Then there exists a member in $\mathscr{C}$ that is the sum of two quasi-smooth curves.
\end{lemma}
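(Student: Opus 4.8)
The plan is to use quasi-smoothness at the point $\mathsf{p}_{_w}$ to bring the two defining equations into a normal form in which $w$ occurs only linearly, to project from $\mathsf{p}_{_w}$, and thereby to present the members of $\mathscr{C}$ as a one-parameter family of conics in a weighted plane; such a family is forced to contain a reducible member. First I would record the shape of the equations. For $n\ge 2$ the only degree-$2n$ monomials involving $w$ are $xw$ and $yw$, since $(2n-1)\nmid 2n$ excludes pure powers of $w$, while $w^2, wz, wt$ all have degree larger than $2n$; in particular $\mathsf{p}_{_w}\in S_{2n,2n}$. Writing $Q_i=w\,L_i(x,y)+R_i(x,y,z,t)$ with $\deg R_i=2n$, the Jacobian criterion at $\mathsf{p}_{_w}$ shows that quasi-smoothness there is equivalent to $L_1,L_2$ being linearly independent. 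Replacing $(Q_1,Q_2)$ by suitable linear combinations and applying a linear change of the weight-one variables $x,y$ — which leaves $\mathscr{C}=\langle x,y\rangle$ unchanged — I may therefore assume
\begin{equation*}
    Q_1=xw+R_1(x,y,z,t)=0,\qquad Q_2=yw+R_2(x,y,z,t)=0,
\end{equation*}
with each $R_i$ quasi-homogeneous of degree $2n$ for the weights $(1,1,n,n)$.

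Next I would eliminate $w$. From $yQ_1-xQ_2$ one gets $yR_1-xR_2=0$, so the projection from $\mathsf{p}_{_w}$ maps $S_{2n,2n}$ into the hypersurface $\{yR_1-xR_2=0\}$ of $\mathbb{P}(1,1,n,n)_{x,y,z,t}$, and it is birational on every member of $\mathscr{C}$ because $w$ is recovered uniquely off $\{x=y=0\}$. On the member $\{y=cx\}$ the expression $cR_1-R_2$ becomes, after the substitution, a ternary quadratic form $G_c(z,t,x^n)$, and the member maps birationally onto the conic $\{G_c=0\}$ in the weighted plane $\mathbb{P}(1,n,n)_{x,z,t}$.

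The assignment $c\mapsto[G_c]$ is then a morphism from the parameter line $\mathscr{C}\cong\mathbb{P}^1$ into the five-dimensional space of ternary conics. It is non-constant: the $z^2,zt,t^2$ coefficients of $cR_1-R_2$ are affine-linear in $c$, and they are not all proportional, because the binary quadratic parts $A_1(z,t),A_2(z,t)$ of $R_1,R_2$ are linearly independent — otherwise $S_{2n,2n}$ would contain a curve inside the singular line $\{x=y=0\}$, contradicting quasi-smoothness and well-formedness. Being non-constant, this curve must meet the discriminant hypersurface $\{\det=0\}$, giving a value $c_0$ with $\det G_{c_0}=0$; choosing it at a simple zero makes the rank exactly two, so $G_{c_0}=\ell_1\ell_2$ splits into two distinct linear forms in $z,t,x^n$, and the member $C_{c_0}\in\mathscr{C}$ decomposes as $C_1+C_2$, where $C_i$ is the preimage of $\{\ell_i=0\}$.

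The remaining and hardest step is to prove that the two components $C_1,C_2$ are themselves quasi-smooth, which is the actual content of the statement. Each $\ell_i=0$ solves one of $z,t$ in terms of the other variables, so $C_i$ is rational; but quasi-smoothness can fail at the coordinate points $\mathsf{p}_{_x},\mathsf{p}_{_z},\mathsf{p}_{_t}$ and along $\{x=y=0\}$ (in particular at the singular point $\mathsf{p}_{_w}$, which is a base point of $\mathscr{C}$), and it has to be checked against the quasi-smoothness data of $S_{2n,2n}$. I would carry this out by computing the Jacobian of $\{y-c_0x,\ \ell_i,\ yw+R_2\}$ at each candidate point and using the independence of the quadratic parts $A_1,A_2$ to exclude singularities on $\{x=y=0\}$; one must also confirm that $c_0$ can be chosen so that the conic is a union of two distinct lines rather than a double line, which is exactly what lets both components occur with multiplicity one. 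Finally, the case $n=1$ is exceptional, since then $\mathsf{p}_{_w}$ need not lie on $S_{2,2}$ and the normal form above is unavailable; there $S_{2,2}$ is the smooth del Pezzo surface of degree four and the claim reduces to the classical fact that the pencil $\langle x,y\rangle$ contains a hyperplane section splitting into two conics, which I would treat separately.
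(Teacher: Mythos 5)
Your argument is essentially the paper's: both normalize the defining equations to $xw+\cdots=yw+\cdots=0$ using quasi-smoothness at $\mathsf{p}_{_w}$, eliminate $w$ on a pencil member to obtain a one-parameter family of conics in $(z,t)$, pick a root of the $3\times 3$ discriminant to get a member splitting into two linear factors, and invoke quasi-smoothness to rule out a double line. The paper is no more detailed than you are on the final point you flag as hardest (it simply records the defining equations of the two components and asserts they are irreducible and reduced), so your proposal matches it in both route and level of rigor.
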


\begin{proof} 
    The surface $S_{2n,2n}$ can be assumed to be defined by the quasi-homogeneous equations
        \begin{equation*}
        \begin{array}{cc} 
            &wx+z(a_1z+b_1t)+f_n(x,y)z+\hat{f}_{n}(x,y)t+f_{2n}(x,y)=0, \\
            &wy+t(a_2z+b_2t)+g_n(x,y)z+\hat{g}_{n}(x,y)t+g_{2n}(x,y)=0,  
        \end{array}
        \end{equation*}
    where $f_n, g_n, \hat{f}_n,$ and $\hat{g}_n$ are homogeneous polynomials of degree $n$ and  $f_{2n}, g_{2n}$ are homogeneous polynomials of degree $2n$. The surface to be quasi-smooth, the polynomials
        \begin{equation*}
        a_1z+b_1t,\ a_2z+b_2t
        \end{equation*}
    are should not be proportional and $a_1 \neq 0, b_2 \neq 0$. The surface is uniquely singular at $\mathsf{p_{_w}}$. Let $C_{\alpha}$ be the member of the pencil cut by the equation $x-\alpha y = 0$ on the surface. Then ~$C_{\alpha}$ is defined by the quasi-homogeneous equations 
       \begin{equation*}
        \begin{array}{ll} 
            &\alpha wy+z(a_1z+b_1t)+f_n(\alpha,1)y^{n}z+\hat{f}_{n}(\alpha,1)y^{n}t+f_{2n}(\alpha,1)y^{2n}=0, \\
            &wy+t(a_2z+b_2t)+g_n(\alpha,1)y^{n}z+\hat{g}_{n}(\alpha,1)y^{n}t+g_{2n}(\alpha,1)y^{2n}=0  
        \end{array} 
        \end{equation*}
    in $\mathrm{Proj}(\mathbb{C}(y,z,t,w))$. Consider the affine piece of the curve $C_\alpha$ defined by $y \neq 0$. It is the affine curve defined by the equations
        \begin{equation*}
        \begin{array}{cc} 
            &\alpha w+z(a_1z+b_1t)+f_n(\alpha,1)z+\hat{f}_{n}(\alpha,1)t+f_{2n}(\alpha,1)=0, \\
            &w+t(a_2z+b_2t)+g_n(\alpha,1)z+\hat{g}_{n}(\alpha,1)t+g_{2n}(\alpha,1)=0  
        \end{array} 
        \end{equation*}
    $\mathrm{Spec}(\mathbb{C}(z,t,w))$. It is isomorphic to the affine curve defined by the equation
        \begin{equation*}\label{eq:reducible}
        \begin{array}{ll} 
            z(a_1z+ b_1t)-\alpha & \hspace{-0.3 cm} t(a_2z+ b_2t) + (f_n(\alpha,1)-\alpha g_n(\alpha,1))z \ + \\
            &(\hat{f}_n(\alpha,1)- \alpha \hat{g}_n(\alpha,1))t + (f_{2n}(\alpha,1)- \alpha g_{2n}(\alpha,1)) = 0
        \end{array} 
        \end{equation*}
    in $\mathrm{Spec}(\mathbb{C}(z,t))$. There exists a constant $\alpha_1$ such that the affine curve $C_{\alpha_1} \setminus C_y$ is defined by the equation
        \begin{equation*}
        (k_1z+\ell_1t+m_1)(k_2z+\ell_2t+m_2)=0,
        \end{equation*}
    where $k_i,\ell_i$ and $m_i$ are constants. Note that two polynomials in the equation of affine part of ~$C_{\alpha_1}$ are not proportional, since the surface $S_{2n,2n}$ is quasi-smooth. This implies that $C_{\alpha_1}$ is defined by the quasi-homogeneous equations
        \begin{equation*}
        \begin{array}{cc} 
           & (k_1z+\ell_1t+m_1)(k_2z+\ell_2t+m_2)=0,\\
              &wy+t(a_2z+b_2t)+g_n(\alpha_1,1)y^{n}z+\hat{g}_{n}(\alpha_1,1)y^{n}t+g_{2n}(\alpha_1,1)y^{2n}=0.  
        \end{array}  
        \end{equation*}
    Hence, the curve $C_{\alpha_1}$ consists of two irreducible reduced curves $L_1$ and $L'_1$, each is defined by the equations 
        \begin{equation*}
        \begin{array}{cc} 
            & x-\alpha_1 y=0,\\
            & k_iz+\ell_it+m_iy^n=0,\\ 
            &wy+t(a_2z+b_2t)+g_n(\alpha_1,1)y^{n}z+\hat{g}_{n}(\alpha_1,1)y^{n}t+g_{2n}(\alpha_1,1)y^{2n}=0.\\
        \end{array} 
        \end{equation*}   
    \end{proof}

    Note that $L_1$ and $L'_1$ intersect transversally at two points which include $\mathsf{p}_{_w}$.

    \begin{lemma}\label{lem:tau} 
    Let $\pi_{_0} : Y_0 \to S_{2n,2n}$ be the weighted blow up at $\mathsf{p_{_w}}$ with weights $(1, 1)$ and let $E$ be the exceptional divisor of $\pi_{_0}$. Then there exists a birational morphism $\tau : Y_0 \to \mathbb{P}^2$ such that the image of $E$ is a conic.
   \end{lemma}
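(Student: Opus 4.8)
The plan is to realize $Y_{0}$ as a conic bundle over $\mathbb{P}^{1}$ on which $E$ is a bisection, and then to contract this conic bundle down to $\mathbb{P}^{2}$ so that $E$ is carried onto a plane conic. First I would pin down the local structure at $\mathsf{p}_{_w}$. In the chart $w\neq 0$ the defining equations used in the proof of Lemma~\ref{lem:LR} let one solve $x=-z(a_{1}z+b_{1}t)+\cdots$ and $y=-t(a_{2}z+b_{2}t)+\cdots$, so that $(z,t)$ are local orbifold coordinates and $\mathsf{p}_{_w}$ is a cyclic quotient singularity of type $\tfrac{1}{2n-1}(n,n)=\tfrac{1}{2n-1}(1,1)$, using $2n\equiv 1 \pmod{2n-1}$. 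Hence the weighted $(1,1)$-blow up $\pi_{0}$ is precisely the minimal resolution: $Y_{0}$ is a smooth rational surface, $E\cong\mathbb{P}^{1}$, and $E^{2}=-(2n-1)$. Computing the discrepancy $\tfrac{2}{2n-1}-1$ gives $K_{Y_{0}}^{2}=5-2n$, so $Y_{0}$ is $\mathbb{P}^{2}$ blown up at $2n+4$ points and in particular admits some birational morphism to $\mathbb{P}^{2}$; the whole point is to choose one sending $E$ to a conic.

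Next I would exploit the pencil $\mathscr{C}=\langle x,y\rangle$. Its unique base point on $S_{2n,2n}$ is $\mathsf{p}_{_w}$ (indeed $x=y=0$ forces $z=t=0$ because $a_{1}b_{2}-a_{2}b_{1}\neq 0$), and a general member $C_{\alpha}$ is an affine conic in $(z,t)$, hence rational. Thus once $\pi_{0}$ resolves the base point the induced map $\widetilde{\rho}\colon Y_{0}\to\mathbb{P}^{1}$ is a conic bundle, with $F^{2}=0$ and $-K_{Y_{0}}\cdot F=2$. The leading quadratic $a_{1}z^{2}+(b_{1}-\alpha a_{2})zt-\alpha b_{2}t^{2}$ shows $C_{\alpha}$ has multiplicity two at $\mathsf{p}_{_w}$ with two tangent directions, so its strict transform meets $E$ in the two points $[z:t]$ solving that quadratic. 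Therefore $E$ is a bisection of $\widetilde{\rho}$, the reducible fibers number $8-K_{Y_{0}}^{2}=2n+3$, and one of them is the explicit $\widetilde{L_{1}}+\widetilde{L_{1}'}$ of Lemma~\ref{lem:LR}, each component meeting $E$ exactly once.

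Finally I would run the relative minimal model program on $\widetilde{\rho}$: contracting one $(-1)$-curve in each reducible fiber reaches a relatively minimal conic bundle, i.e. a Hirzebruch surface $\mathbb{F}_{e}$, under which $E$ stays a bisection $\overline{E}$ with $\overline{E}\cdot f=2$. Using the freedom in choosing which component to contract (equivalently, elementary transformations centered on $E$), I would arrange $e=1$ and $\overline{E}\cdot s=0$, which forces $\overline{E}\in\lvert 2s+2f\rvert$. Blowing down the $(-1)$-section $s$ by $\beta\colon\mathbb{F}_{1}\to\mathbb{P}^{2}$ then sends $\overline{E}$ to $\beta_{*}(2s+2f)=2H$, a plane curve of degree two; since $E\cong\mathbb{P}^{1}$ maps birationally onto it, the image is an irreducible conic, and $\tau=\beta\circ(\text{contraction})$ is the desired morphism.

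The main obstacle is precisely this last bookkeeping: guaranteeing that the fibrewise contractions can be steered to the surface $\mathbb{F}_{1}$ (rather than to some $\mathbb{F}_{e}$ with $e\geq 2$, which admits no morphism to $\mathbb{P}^{2}$) while keeping $\overline{E}$ disjoint from the negative section and $\tau|_{E}$ birational, so that $\tau(E)$ is a genuine conic and not a double line. The explicit reducible fiber of Lemma~\ref{lem:LR}, together with the numerical constraint $E^{2}=-(2n-1)$ — equivalently that the self-intersection gain under the $2n+3$ contractions meeting $E$ once each is exactly $2n+3$, raising $E^{2}$ to the conic value $4$ — is what I would use to pin down these contractions.
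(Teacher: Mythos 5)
Your proposal is correct in outline but takes a genuinely different route from the paper. The paper never passes through the conic bundle structure: it uses the determinantal condition on the pencil $\mathscr{C}$ to produce reducible members $L_i+L_i'$ one at a time, contracts the $(-1)$-curves in pairs so as to descend through the chain $S_{2n,2n}\to S_{2n-2,2n-2}\to\cdots\to S_{2,2}$ (equivalently $Y_0\to Y_2\to\cdots\to Y_{2n-2}$), lands on a smooth cubic surface on which $\tau_{_1}(E)$ is a line, and then invokes the classical configuration of six disjoint lines on a cubic, five of which meet a given line, to contract to $\mathbb{P}^2$ with $E$ going to a conic. Your route instead runs the relative MMP for the conic bundle $\widetilde{\rho}$ all at once, reaching a Hirzebruch surface with $\overline{E}$ a bisection of self-intersection $4$. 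What your approach buys is a cleaner global picture (one fibration, one count $8-K_{Y_0}^2=2n+3$ of degenerate fibers) and it sidesteps the inductive bookkeeping of the intermediate complete intersections; what the paper's approach buys is the explicit tower of surfaces $S_{2k,2k}$ and the diagram \eqref{eq:total}, which are reused later (e.g.\ in Proposition \ref{prop:support2} and Remark \ref{rmk:Sawahara}), so the extra structure is not wasted.

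Two details in your last step deserve to be nailed down, though neither is a gap in substance. First, you need every reducible fiber to split as two $(-1)$-curves each meeting $E$ exactly once; this follows because both components of a reducible $C_\alpha$ must pass through $\mathsf{p}_{_w}$ (their mutual intersection number $\tfrac{2n}{2n-1}$ is not an integer, so they meet at the singular point), whence each strict transform meets $E$ with multiplicity one. Second, the "steering to $\mathbb{F}_1$" you worry about is essentially automatic: once $\overline{E}$ is an irreducible bisection with $\overline{E}^2=4$, writing $\overline{E}\equiv 2s+bf$ gives $b=e+1$ and $\overline{E}\cdot s=1-e$, which is nonnegative since $\overline{E}\neq s$; hence $e\in\{0,1\}$, and if $e=0$ a single switch of the contracted component in one fiber (an elementary transformation) moves you to $\mathbb{F}_1$, where $\overline{E}\cdot s=0$ holds for free and $\beta_*\overline{E}$ is an irreducible conic because $\overline{E}$ is a smooth rational curve disjoint from $s$.
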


 \begin{proof}
    For a weighted blow up $\pi_{_0} : Y_0 \to S_{2n,2n}$ at $\mathsf{p}_{_{w}}$, we have
    \begin{equation*}
        -K_{Y_0} \sim_{\mathbb{Q}} \pi_{_0}^{\ast}(-K_{S_{2n,2n}})+\frac{2n-3}{2n-1} E, 
    \end{equation*}
    and by Lemma \ref{lem:LR}, there exists a reducible member $L_1+L'_1$ of the linear system $\mathscr{C}$ with 
\begin{equation*}
    \begin{array}{cc}
     &L_1\cdot(-K_{S_{2n,2n}})=L'_1\cdot(-K_{S_{2n,2n}})=\dfrac{2}{2n-1}, \\
     &(L_1)^2=(L'_1)^2=\dfrac{2-2n}{2n-1},\ \ L_1 \cdot L'_1=\dfrac{2n}{2n-1},\ \  C_{\alpha}\cdot (-K_{S_{2n,2n}})=\dfrac{4}{2n-1},\\
    \end{array}
    \end{equation*}
    where $C_{\alpha}$ is a general member of the pencil $\mathscr{C}$. This implies that  
    \begin{equation*}\label{eq:intersection}
    \begin{array}{cc}
    &(-K_{Y_0})^2=-(2n-5),\ \ E^2=-(2n-1),\ \ (L_1)^2=(L'_1)^2=-1,\\
    \end{array}
    \end{equation*}
    where we used the same notation for the birational transforms of $L_1, L'_1$ of $\pi_{_0}$.
    By a suitable coordinate change, we may assume that the reducible member is defined by $x=0$. Then the equation of $S_{2n,2n}$ can be written as
\begin{equation}\label{eq:coordinate}
    \begin{array}{cc} 
        &wx+zt=0, \\
        &wy+z^2+t^2+g_n(x,y)z+\hat{g}_{n}(x,y)t+g_{2n}(x,y)=0.  
    \end{array}
\end{equation}

Now we consider a hyperplane section given by $y=ax$ on $S_{2n,2n}$. From \eqref{eq:coordinate}, it is defined by
\begin{equation*}
    -azt+z^2+t^2+g_n(1,a)x^n z+\hat{g}_{n}(1,a)x^n  t+g_{2n}(1,a)x^{2n}=0.
\end{equation*}

Then a root of the determinant equation of matrix
\begin{equation*}
    \begin{pmatrix}
     1 & -a & g_n(1,a) \\
     -a & 1 & \hat{g}_n(1,a) \\
     g_n(1,a)  & \hat{g}_n(1,a) & g_{2n}(1,a)
    \end{pmatrix}
     \end{equation*}
gives another reducible member $L_2+L'_2$ of the pencil $\mathscr{C}$. Let $\tau_{_1}^{(1)} : Y_0 \to Y_2$ be a birational morphism obtained by contracting the curves, $L_1, L_2$. We have
\begin{equation*}\label{eq:link}
        \begin{tikzcd}[column sep=3em, row sep=3em]
            Y_0 \arrow[dd, "\pi_{_0}"'] \arrow[rr, "\tau_{1}^{(1)}"] & & Y_2 \arrow[dd, "\pi_{_1}"] \\
            & & \\
            S_{2n,2n} \arrow[rr, "\sigma_{1}^{(1)}"] & & S_{2n-2,2n-2}
        \end{tikzcd}
        \end{equation*}
    where $\sigma_{_1}^{(1)} : S_{2n,2n} \to S_{2n-2,2n-2}\subset \mathbb{P}(1,1,n-1,n-1,2n-3)$ (resp. $\pi_{_1} : Y_2 \to S_{2n-2,2n-2}$) is the birational morphism obtained by contracting $L_1, L_2$ (resp. $\tau_{_1}^{(1)}(E)$).\\
By applying a similar argument to $\pi_{_1}$, there exist two disjoint $(-1)$-curves $L_3, L_4$ on $Y_2$ and birational morphisms 
\begin{equation*}
    \tau_{_1}^{(2)}, \sigma_{_1}^{(2)} \ \mathrm{and} \ \pi_{_2},
\end{equation*}

where $\tau_{_1}^{(2)} : Y_2 \to Y_4$, $\sigma_{_1}^{(2)} : S_{2n-2,2n-2} \to S_{2n-4,2n-4}$ and $\pi_{_2} : Y_4 \to S_{2n-4,2n-4}$. In this way, we obtain the birational morphisms 
\begin{equation*}
    \begin{array}{cc} 
        & \tau_{_1} : Y_0 \to Y_{2n-2},\\
        & \sigma_{_1} : S_{2n,2n} \to S_{2,2},
    \end{array}    
\end{equation*}

where $\tau_{_1}=\tau_{_1} ^{(n-1)} \circ \cdots \circ \tau_{_1} ^{(1)}$, $\sigma_{_1}=\sigma_{_1} ^{(n-1)} \circ \cdots \circ \sigma_{_1} ^{(1)}$, respectively.
    \begin{equation*}\label{eq:links}
        \begin{tikzcd}[column sep=2.2em, row sep=2.2em]
            Y_0 \arrow[dd, "\pi_{_0}"] \arrow[rr, "\tau_{1}^{(1)}"] & & Y_2 \arrow[dd, "\pi_{_1}"] \arrow[rr, "\tau_{1}^{(2)}"] & & Y_4 \arrow[r] \arrow[dd, "\pi_{_2}"] & \cdots \arrow[r] & Y_{2n-4} \arrow[dd, "\pi_{_{n-2}}"] \arrow[rr, "\tau_{1}^{(n-1)}"] & & Y_{2n-2} \arrow[dd, "\pi_{_{n-1}}"] \\
            & & & & & \cdots & & & \\
            S_{2n,2n} \arrow[rr, "\sigma_{1}^{(1)}"] & & S_{2n-2,2n-2} \arrow[rr, "\sigma_{1}^{(2)}"] & & S_{2n-4,2n-4} \arrow[r] & \cdots \arrow[r] & S_{4,4} \arrow[rr, "\sigma_{1}^{(n-1)}"] & & S_{2,2}
        \end{tikzcd}
        \end{equation*}

Let $L_1,\dots,L_{2n-2}$ be $2n-2$ disjoint $(-1)$-curves contracted by $\tau_{_1}$. Then $Y_{2n-2}$ is a smooth del Pezzo surface of degree $3$ and 
\begin{equation*}
    (\tau_{_1}(E))^2=-1.
\end{equation*}

Hence, there exist six more disjoint $(-1)$-curves $L_{2n-1}, \dots, L_{2n+4}$ on $Y_{2n-2}$ such that 
\begin{equation*}
         \tau_{_1}(E) \cdot L_i = 1,\ \tau_{_1}(E) \cdot L_{2n+4}=0,
\end{equation*}

where $i=2n-1, \dots, 2n+3$. The proof is completed by letting 
\begin{equation*}
    \tau =\tau_{_2} \circ \tau_{_1},
\end{equation*}

where $\tau_{_2} : Y_{2n-2} \to \mathbb{P}^2$ is the birational morphism obtained by contracting $L_{2n-1}, \dots, L_{2n+4}$. 
\end{proof}

\begin{lemma}\label{lem:pi}
    There exists a birational morphism $\pi : Y_0 \to \mathbb{P}(1,1,2n-1)$.
\end{lemma}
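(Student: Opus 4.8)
The plan is to factor $\pi$ through the Hirzebruch surface $\mathbb{F}_{2n-1}$, using the standard fact that $\mathbb{P}(1,1,2n-1)$ is the projective cone over the rational normal curve of degree $2n-1$, hence is obtained from $\mathbb{F}_{2n-1}$ by contracting its negative section $\Sigma$ with $\Sigma^2=-(2n-1)$ to the cyclic quotient singularity $\tfrac{1}{2n-1}(1,1)$. Since the exceptional curve $E$ of $\pi_{_0}$ already satisfies $E^2=-(2n-1)$, the strategy is to build a birational morphism $Y_0\to\mathbb{F}_{2n-1}$ sending $E$ isomorphically to the negative section, and then to contract that section.

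First I would exploit the description of $Y_0$ furnished by Lemma \ref{lem:tau}: the morphism $\tau$ realizes $Y_0$ as a blow-up of $\mathbb{P}^2$ at $2n+4$ centres, and $E$ is the strict transform of the conic $Q=\tau(E)$. Writing $\ell$ for the pullback of a line and $e_1,\dots,e_{2n+4}$ for the exceptional classes, the incidence data of Lemma \ref{lem:tau} (namely that $E$ meets $L_1,\dots,L_{2n+3}$ but not $L_{2n+4}$) gives $E=2\ell-e_1-\cdots-e_{2n+3}$ in $\pic(Y_0)$, consistent with $E^2=4-(2n+3)=-(2n-1)$. Fixing one centre $p_1$ lying on $Q$, the pencil of lines through $p_1$ pulls back to the base-point-free pencil $|\ell-e_1|$, defining a conic bundle $g:Y_0\to\mathbb{P}^1$ whose general fibre $f$ satisfies $-K_{Y_0}\cdot f=3-1=2$. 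A direct computation gives $E\cdot f=(2\ell-\sum_{i=1}^{2n+3}e_i)\cdot(\ell-e_1)=2-1=1$, so $E$ is a section of $g$.

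Next I would run the relative minimal model program for $g$ over $\mathbb{P}^1$. Each singular fibre of the conic bundle is a pair of $(-1)$-curves meeting at a point, and since the section $E$ meets every fibre exactly once it meets exactly one component of each singular fibre; contracting the other component throughout yields a birational morphism $Y_0\to\mathbb{F}_m$ onto a $\mathbb{P}^1$-bundle, all of whose contracted curves are disjoint from $E$. Consequently $E$ remains irreducible with $E^2=-(2n-1)$ and maps to a section of $\mathbb{F}_m\to\mathbb{P}^1$. Because $2n-1\ge 1$, an irreducible curve of negative self-intersection on $\mathbb{F}_m$ must be the unique negative section, which forces $m=2n-1$ and identifies $E$ with the negative section of $\mathbb{F}_{2n-1}$. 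Contracting this section along the standard morphism $\mathbb{F}_{2n-1}\to\mathbb{P}(1,1,2n-1)$ and composing with $Y_0\to\mathbb{F}_{2n-1}$ produces the desired $\pi$.

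The points needing care are that $|\ell-e_1|$ is genuinely base-point-free on $Y_0$ even in the presence of possibly infinitely near centres, which holds because a general line through $p_1$ avoids the remaining centres and their tangent directions; and that the relative MMP can always be steered so that no contracted curve meets $E$, which is guaranteed by the sectional identity $E\cdot f=1$. I expect the only real obstacle to be the bookkeeping that pins down the terminal $\mathbb{P}^1$-bundle as precisely $\mathbb{F}_{2n-1}$ and its contraction as precisely $\mathbb{P}(1,1,2n-1)$; the equality $E^2=-(2n-1)$ together with the uniqueness of the negative section on a Hirzebruch surface is exactly what resolves this.
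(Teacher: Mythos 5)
Your proof is correct and follows essentially the same route as the paper: both factor $\pi$ as $Y_0\to\mathbb{F}_{2n-1}\to\mathbb{P}(1,1,2n-1)$ by contracting $2n+3$ disjoint $(-1)$-curves disjoint from $E$ so that $E$ becomes the negative section, which is then blown down to the vertex of the cone. The only cosmetic difference is that the paper exhibits those curves explicitly (the strict transforms $\widetilde{M}_1,\dots,\widetilde{M}_{2n+2}$ of the lines joining each $\mathsf{p}_i$ to $\mathsf{p}_{2n+3}$, together with $L_{2n+4}$), whereas you produce the same contraction by running the relative MMP for the conic bundle $|\ell-e_1|$ and using $E\cdot f=1$ to steer it away from $E$.
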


\begin{proof}
    Let $\mathsf{p}_{_1},\dots,\mathsf{p}_{_{2n+4}}$ be the points in $\mathbb{P}^2$ that are the images of
     the $(-1)$-curves $L_{1},\dots, L_{2n+4}$ of $\tau$ in Lemma \ref{lem:tau}. Then $\tau(E)$ is a conic that passes through the points $\mathsf{p}_{_1},\dots,\mathsf{p}_{_{2n+3}}$ but not ~$\mathsf{p}_{_{2n+4}}$. Let $M_i$ be the line on $\mathbb{P}^2$ passing through the points $\mathsf{p}_{_i}$ and $\mathsf{p}_{_{2n+3}}$, and let $\widetilde{M}_i$ be the birational transform of $M_i$ under ~$\tau$, where for $i=1,\dots,2n+2$. Then 
     \begin{equation*}
      \widetilde{M}_1,\dots,\widetilde{M}_{2n+2}\ \mathrm{and} \ \ L_{2n+4}
     \end{equation*}
     
      are disjoint $(-1)$-curves on $Y_0$, none of which intersect $E$. Now let 
      \begin{equation*}
        \eta_{_1} : Y_0 \to \mathbb{F}_{2n-1} \ (\mathrm{resp.} \ \eta_{_2} : \mathbb{F}_{2n-2} \to \mathbb{P}(1,1,2n-1))
      \end{equation*}
      
      be the birational morphism obtained by contracting the $2n+3$ disjoint $(-1)$-curves, 
      \begin{equation*}
        \widetilde{M}_1, \dots, \widetilde{M}_{2n+2} \ \ \mathrm{and}\ \ L_{2n+4} \ (\mathrm{resp.\ the\ negative\ section\ of\ } \mathbb{F}_{2n-1}).
      \end{equation*}
      
      The proof is completed by letting 
      \begin{equation*}
        \pi = \eta_{_0} \circ \pi_{_0},
      \end{equation*}
      
      where $\eta_{_0} : S_{2n,2n} \to \mathbb{P}(1,1,2n-1)$ is the induced map.
\end{proof}

From the above, we have a diagram:

\begin{equation}\label{eq:total}
    \begin{tikzcd}[column sep=1.8em, row sep=2.5em]
    & & & & & & Y_0 \arrow[dllllll, "\eta_{_1}"'] \arrow[dll, "\pi_{_0}"] \arrow[drr, "\tau_{_1}"] & & & & \\
    \mathbb{F}_{2n-1} \arrow[dd, "\eta_{_2}"'] & & & & S_{_{2n,2n}} \arrow[ddllll, "\eta_{_0}"] \arrow[drr, "\sigma_{_1}"] & & & & Y_{_{2n-2}} \arrow[drr, "\tau_{_2}"] \arrow[dll, "\pi_{_{n-1}}"] & & \\
    & & & & & & S_{_{2,2}} \arrow[drr, "\sigma_{_2}"] & & & & \mathbb{P}^2 \arrow[dll, "id"] \\
    \mathbb{P}(1,1,2n-1) & & & & & & & & \mathbb{P}^2 & & 
    \end{tikzcd}
    \end{equation}

As previously noted, let $D\sim_{\mathbb{Q}} -K_{S_{2n,2n}}$ be an effective $\mathbb{Q}$-divisor.

\begin{lemma}\label{lem:pw}
The log pair $(S_{2n,2n},D)$ is log canonical along $S_{2n,2n} \setminus \left \{ \mathsf{p_{_w}} \right \}$, if $n\geq 2$.
\end{lemma}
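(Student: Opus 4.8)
The plan is to argue by contradiction, assuming $(S_{2n,2n},D)$ fails to be log canonical at some point $\mathsf{p}\in S_{2n,2n}\setminus\{\mathsf{p}_{_w}\}$. Since $\mathsf{p}_{_w}$ is the only singular point of the surface, $\mathsf{p}$ is smooth, so Lemma \ref{lem:multiplicity} gives $\mult_{\mathsf{p}}(D)>1$. The natural tool is the anticanonical pencil $\mathscr{C}=\lvert-K_{S_{2n,2n}}\rvert$ spanned by the two degree-one monomials $x,y$. First I would check that its base locus is exactly $\{\mathsf{p}_{_w}\}$: setting $x=y=0$ in the defining equations and invoking quasi-smoothness (the linear forms $a_1z+b_1t$ and $a_2z+b_2t$ of Lemma \ref{lem:LR} are not proportional) forces $z=t=0$. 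Consequently a member $C\in\mathscr{C}$ passes through $\mathsf{p}$, and since $C\sim-K_{S_{2n,2n}}$ and $D\sim_{\mathbb{Q}}-K_{S_{2n,2n}}$ we have $C\cdot D=(-K_{S_{2n,2n}})^2=\tfrac{4}{2n-1}$.

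For all but finitely many members $C$ is quasi-smooth at $\mathsf{p}$, hence $(S_{2n,2n},C)$ is log canonical there and $\mult_{\mathsf{p}}(C)=1$. If $C=D$ this already contradicts the failure of log canonicity; otherwise I would apply Lemma \ref{lem:convex} with $T=C$ to replace $D$ by an effective $D_{\mu}\sim_{\mathbb{Q}}-K_{S_{2n,2n}}$ that is still not log canonical at $\mathsf{p}$ but no longer contains the irreducible curve $C$ in its support. Then
\[
    \frac{4}{2n-1}=C\cdot D_{\mu}\geq \mult_{\mathsf{p}}(C)\,\mult_{\mathsf{p}}(D_{\mu})\geq \mult_{\mathsf{p}}(D_{\mu})>1,
\]
which is absurd as soon as $\tfrac{4}{2n-1}\leq 1$, i.e. for $n\geq 3$. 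This disposes of every point when $n\geq 3$.

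The hard part will be the borderline value $n=2$, where $(-K_{S_{4,4}})^2=\tfrac43>1$ and the inequality above is no longer a contradiction; moreover the finitely many points at which the member $C$ is singular must be absorbed into the same discussion. Points lying on a reducible member $L_i+L_i'$ of Lemma \ref{lem:LR} are harmless: intersecting with the component $L_i$ through $\mathsf{p}$ (again using Lemma \ref{lem:convex} to arrange $L_i\not\subset\supp(D)$) gives $L_i\cdot D=\tfrac{2}{2n-1}\leq\tfrac23<1$, contradicting $\mult_{\mathsf{p}}(D)>1$. The genuine obstacle is a \emph{general} point of $S_{4,4}$, through which no effective curve of anticanonical degree at most one is available, so intersection theory on $S_{4,4}$ alone cannot close the argument. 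To handle it I would transport the log pair through the birational model constructed in this section — the contraction $\sigma_{_1}:S_{4,4}\to S_{2,2}$ onto the smooth quartic del Pezzo surface from Lemma \ref{lem:tau} (equivalently the blow-up $\pi_{_0}:Y_0\to S_{4,4}$ together with $\pi:Y_0\to\mathbb{P}(1,1,3)$ of Lemma \ref{lem:pi}) — carefully tracking the discrepancies of the $(-1)$-curves $L_1,L_2$ and of the exceptional divisor $E$, and then derive the contradiction on the smooth surface. Showing that a non-log-canonical singularity at a general point of $S_{4,4}$ cannot persist after this transfer, and bounding the multiplicities it produces downstairs, is the crux of the whole lemma.
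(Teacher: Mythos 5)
Your reduction for $n\geq 3$ follows the paper's Case~2, but it has a gap in the reducible case: Lemma \ref{lem:convex} applied with $T=L_1+L_1'$ only guarantees that $D_\mu$ omits \emph{at least one} component of $\supp(T)$, and that component may be $L_1'$ rather than the component $L_1$ through $\mathsf{p}$; so you cannot simply ``arrange $L_1\not\subset\supp(D)$''. The paper closes exactly this subcase by writing $D=aL_1+\Delta$ with $L_1\not\subset\supp(\Delta)$, bounding $a\leq \frac{L_1'\cdot D}{L_1'\cdot L_1}=\frac{1}{n}$, and then applying inversion of adjunction along $L_1$ to get $\frac{2+a(2n-2)}{2n-1}=L_1\cdot\Delta>1$, i.e.\ $a>\frac{2n-3}{2n-2}$, contradicting $a\leq\frac{1}{n}$. (A smaller imprecision: exactly one member of the pencil passes through a fixed $\mathsf{p}$ outside the base locus, so ``for all but finitely many members $C$ is quasi-smooth at $\mathsf{p}$'' is not the right formulation; what actually saves you is that an irreducible member is a smooth conic on the affine chart $y\neq 0$, hence smooth at every point other than $\mathsf{p}_{_w}$.)

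The more serious issue is $n=2$, which you correctly identify as the crux but leave as a plan — and the plan as stated would not close. Pushing $D$ forward along $\sigma_{_1}$ (or along $\tau$) produces on a smooth del Pezzo surface of degree $4$ (or $3$, or on $\mathbb{P}^2$) an effective anticanonical $\mathbb{Q}$-divisor that is not log canonical at a point; such pairs genuinely exist on those surfaces, so there is no contradiction to be harvested ``downstairs'' without substantial additional input (a classification of the possible non-lc anticanonical pairs on the smooth model, plus a separate treatment of points lying on the contracted curves and on $E$). The paper does not use the birational model for this lemma at all: it argues locally at $\mathsf{p}$, blowing up $S_{4,4}$ there, recording that non-log-canonicity persists at a point $\mathsf{q}$ of the exceptional curve with $\mult_{\mathsf{q}}(\widetilde{D})+\mult_{\mathsf{p}}(D)>2$, and then, according to which two of $y,z,t$ serve as local coordinates at $\mathsf{p}$, producing a curve in $\vert\mathscr{O}_{S_{4,4}}(2)\vert$ (of the form $az+bt=0$ or $axy+bz=0$) or in $\vert\mathscr{O}_{S_{4,4}}(1)\vert$ whose birational transform passes through $\mathsf{q}$, and intersecting; the residual subcase is again handled by the $D=aL_1+\Delta$ adjunction argument. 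The global model and the classification of non-lc anticanonical pairs on the cubic surface enter only later, in Proposition \ref{prop:support2}, and only at the singular point $\mathsf{p}_{_w}$.
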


\begin{proof}
 We divide the proof into the cases $n=2$ and $n \geq 3$, which are treated in Cases 1 and 2, respectively. Let $\mathsf{p}$ be a smooth point in $S_{2n,2n} \setminus \left \{ \mathsf{p_{_w}} \right \}$ with $n \geq 2$.\\

\textbf{Case 1.} Suppose that the log pair $(S_{4,4}, D)$ is not log canonical at $\mathsf{p}$ on $S_{4,4} \subset \mathbb{P}(1,1,2,2,3)$.\\ 
By a suitable coordinate change, we may assume that $\mathsf{p} = \mathsf{p}_{_{x}}$ and $S_{4,4}$ is defined by two quasihomogeneous polynomials
\begin{equation*}
    wx + f(x,y,z,t) = wy + g(x,y,z,t) = 0,
\end{equation*}
where $f(x,y,z,t)$ and $g(x,y,z,t)$ are quasihomogeneous polynomials of degree $4$. We denote by $U_x$ the affine chart $X\setminus H_x$, which is defined by the equations
\begin{equation*}
    wx + f(1,y,z,t) = wy + g(1,y,z,t) = 0.
\end{equation*}
Since the first equation contains the term $w$, we can choose two local coordinates at $\mathsf{p}$ from the set $\{y,z,t\}$. Now let $\varphi : \widetilde{S_{4,4}} \to S_{4,4}$ be a blow up at $\mathsf{p}$. Then the log pair
\begin{equation*}
 \left(\widetilde{S_{4,4}}, \widetilde{D}+(\alpha-1)G \right) 
\end{equation*}
is not log canonical at a point $\mathsf{q}$ on $G$, where $\widetilde{D}$ is the birational transform of $D$ of $\varphi$ with the exceptional divisor $G$ and $\alpha=\mult_{\mathsf{p}}(D)>1$. This implies that
\begin{equation}\label{eq:2-alpha}
 \mult_{\mathsf{q}}(\widetilde{D})+\alpha > 2.
\end{equation}
We first consider the case in which $z$ and $t$ are local coordinates at $\mathsf{p}$. Then there exists a curve 
\begin{equation*}
    C \in \left \vert \mathscr{O}_{S_{4,4}}(2) \right \vert
\end{equation*}
defined by $az + bt = 0$, where $a$ and $b$ are constants, whose birational transform $\widetilde{C}$ passes through $\mathsf{q}$. Since $C$ is smooth at $\mathsf{p}_{_{w}}$ and the intersection $C\cap H_w$ is 0-dimensional, it follows that $C$ is irreducible. In addition, since the log pair $(S_{4,4},\frac{1}{2}C)$ is log canonical, we may assume that $C \not\subset \supp(D)$. This implies that 
\begin{equation*}
\frac{8}{3}-2\alpha=C\cdot D-2\alpha = \widetilde{C}\cdot \widetilde{D}\geq \mult_{\mathsf{q}}(\widetilde{D}). 
\end{equation*}
By \eqref{eq:2-alpha}, 
\begin{equation*}
 \frac{8}{3}-2\alpha > 2-\alpha,
\end{equation*}
which contradicts $\alpha > 1$. Hence, it suffices to consider the case where $y$ and $z$ are local coordinates at ~$\mathsf{p}$. Then there exists a curve 
\begin{equation*}
    C\in \left \vert \mathscr{O}_{S_{4,4}}(2) \right \vert
\end{equation*}
defined by $ axy + bz = 0$, where $a$ and $b$ are constants, whose birational transform $\widetilde{C}$ passes through $\mathsf{q}$.\\
If $b\neq 0$ then $C$ is irreducible, and a contradiction follows as in the argument above. If $b = 0$, then we may choose a curve 
\begin{equation*}
    C_1\in \left \vert \mathscr{O}_{S_{4,4}}(1) \right \vert
\end{equation*}
instead of $C$, whose the birational transform $\widetilde{C_1}$ passes through $\mathsf{q}$. By Lemma \ref{lem:LR}, we are led to consider the following two cases: 
\begin{itemize}
 \item[$\bullet$] $\mathsf{p}$ lies on an irreducible curve $C_1 \in \left \vert \mathscr{O}_{S_{4,4}}(1) \right \vert$ and $\widetilde{C_1}$ passes through $\mathsf{q}$;

 \item[$\bullet$] $\mathsf{p}$ lies on $L_1$ with $L_1+L'_1 \in \left \vert \mathscr{O}_{S_{4,4}}(1) \right \vert$ and $\widetilde{L_1}$ passes through $\mathsf{q}$.
 \end{itemize}

If $\mathsf{q} \in \widetilde{C_1}$, then we may assume that $C_1 \not\subset \supp(D)$. Again, \eqref{eq:2-alpha} yields a contradiction
\begin{equation*}
    \frac{4}{3}-\alpha=C_1 \cdot D-\alpha = \widetilde{C_1} \cdot \widetilde{D} \geq \mult_{\mathsf{q}}(\widetilde{D}) > 2-\alpha.
\end{equation*}  
If $\mathsf{q} \in \widetilde{L_1}$ and $L_1 \not\subset \supp(D)$, then we also have a contradiction
\begin{equation*}
\frac{2}{3}-\alpha=L_1 \cdot D-\alpha = \widetilde{L_1} \cdot \widetilde{D} \geq \mult_{\mathsf{q}}(\widetilde{D}) > 2-\alpha. 
\end{equation*}

Furthermore, if $\mathsf{q} \in \widetilde{L_1}$ and $L_1 \subset \supp(D)$, then we may write
\begin{equation*}
 D=aL_1+\Delta,
\end{equation*}
where $L_1 \not\subset \supp(\Delta)$ and $L'_1 \not\subset \supp(D)$ with some positive rational number $a$. From
\begin{equation*}
L'_1\cdot D =a L'_1\cdot L_1+L'_1\cdot \Delta \geq aL'_1 \cdot L_1, 
\end{equation*}
we have
\begin{equation*}
\frac{1}{2} = \frac{L'_1 \cdot D}{L'_1 \cdot L_1} \geq a.
\end{equation*}
Then the inversion of adjunction
\begin{equation*}
    \frac{2+2a}{3}=L_1 \cdot (D-aL_1)=L_1 \cdot \Delta \geq \mult_{\mathsf{p}}(\Delta \vert_{L_1}) >1,
   \end{equation*}
implies a contradiction
\begin{equation*}
\frac{1}{2} \geq a >\frac{1}{2}.
\end{equation*}

\textbf{Case 2.} Suppose that the log pair $(S_{2n,2n},D)$ is not log canonical at $\mathsf{p}$ with $n\geq 3$.\\ 
Then there exists a curve $C \in \left \vert \mathscr{O}_{S_{2n,2n}}(1) \right \vert$ such that $C$ passing through $\mathsf{p}$. We may proceed as in the previous case. If $C$ is irreducible, then we have a contradiction,
\begin{equation*}
 1 > \frac{4}{2n-1}=C \cdot D \geq \mult_{\mathsf{p}}(C) \cdot \mult_{\mathsf{p}}(D) > 1.
\end{equation*}
If $C$ is reducible, by Lemma \ref{lem:LR}, $C=L_1+L'_1$ and we may assume that $\mathsf{p} \in L_1$. If $L_1 \not\subset \supp(D)$, then
\begin{equation*}
 1>\frac{2}{2n-1}=L_1 \cdot D \geq \mult_{\mathsf{p}}(L_1)\cdot \mult_{\mathsf{p}}(D)>1,
\end{equation*}
which is a contradiction. If $L_1 \subset \supp(D)$, then 
\begin{equation*}
 D=aL_1+\Delta,
\end{equation*}
where $L_1 \not\subset \supp(\Delta)$ and $L'_1 \not\subset \supp(D)$ with some positive rational number $a$.
From
\begin{equation*}
L'_1\cdot D =a L'_1\cdot L_1+L'_1\cdot \Delta \geq aL'_1 \cdot L_1, 
\end{equation*}
we have
\begin{equation*}
   1 > \frac{1}{n} = \frac{L'_1 \cdot D}{L'_1 \cdot L_1} \geq a.
\end{equation*}
Then the inversion of adjunction
\begin{equation*}
 \frac{2+a(2n-2)}{2n-1}=L_1 \cdot (D-aL_1)=L_1 \cdot \Delta \geq \mult_{\mathsf{p}}(\Delta \vert_{L_1}) >1,
\end{equation*}
also implies a contradiction 
\begin{equation*}
\frac{1}{n} \geq a > \frac{2n-3}{2n-2}.
\end{equation*}
\end{proof}   

\begin{proposition}\label{prop:support2} 
If a log pair $(S_{2n,2n},D)$ is not log canonical at a point $\mathsf{p}$ with $n \geq 2$, then there exists an effective anticanonical divisor $T$ such that $(S_{2n,2n},T)$ is not log canonical at $\mathsf{p}$ and the support of $T$ is contained in the support of $D$.
\end{proposition}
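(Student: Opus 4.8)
The plan is to localize the failure of log canonicity at the unique quotient singularity and then force a member of the anticanonical pencil into $\supp(D)$, from which the divisor $T$ is extracted. By Lemma \ref{lem:pw}, the only point at which $(S_{2n,2n},D)$ can fail to be log canonical when $n\geq 2$ is the singular point $\mathsf{p}=\mathsf{p}_{_w}$, a cyclic quotient singularity of type $\frac{1}{2n-1}(1,1)$; this is the reduction I would make first. The relevant geometry is the anticanonical pencil $\mathscr{C}=\langle x,y\rangle$, whose base locus is exactly $\{\mathsf{p}_{_w}\}$ (setting $x=y=0$ in \eqref{eq:coordinate} leaves only $\mathsf{p}_{_w}$). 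On the weighted blow up $\pi_{_0}\colon Y_0\to S_{2n,2n}$ of Lemma \ref{lem:tau}, with exceptional curve $E$ satisfying $E^2=-(2n-1)$ and $-K_{Y_0}\sim_{\mathbb{Q}}\pi_{_0}^{\ast}(-K_{S_{2n,2n}})+\frac{2n-3}{2n-1}E$, the strict transforms $\widetilde{C}_\alpha$ of the members of $\mathscr{C}$ form a base-point-free pencil with $\widetilde{C}_\alpha^2=0$ and $\widetilde{C}_\alpha\cdot E=2$. A direct computation then gives $\pi_{_0}^{\ast}(K_{S_{2n,2n}}+C_\alpha)=K_{Y_0}+\widetilde{C}_\alpha+E$, so the coefficient of $E$ is exactly $1$: a general (irreducible) member $C_\alpha$ meets $E$ transversally at two points and the pair is log canonical at $\mathsf{p}_{_w}$, whereas a member tangent to $E$ — one of the two ramification values of the degree-two map $E\to\mathbb{P}^1$ — gives a pair that is not log canonical at $\mathsf{p}_{_w}$. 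These tangent members are the candidates for $T$.

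Next I would extract information from $D$. Writing $\pi_{_0}^{\ast}D=\widetilde{D}+mE$ and intersecting with a general fibre gives $\widetilde{D}\cdot\widetilde{C}_\alpha=\frac{4}{2n-1}-2m\geq 0$, so $m\leq\frac{2}{2n-1}$ and the coefficient $m+\frac{2n-3}{2n-1}$ of $E$ in the log pullback is at most $1$. Consequently the non-lc point lifts to a point $\mathsf{q}\in E$; let $C_0\in\mathscr{C}$ be the member whose strict transform passes through $\mathsf{q}$. The core of the argument is to show $C_0\subset\supp(D)$. Supposing the contrary, $\widetilde{C}_0\not\subset\supp(\widetilde{D})$, and I would combine inversion of adjunction (Lemma \ref{lem:adjuction}) along $E$, which forces $(\widetilde{D}\cdot E)_{\mathsf{q}}>1$ and hence $m>\frac{1}{2n-1}$, with the estimate $\widetilde{D}\cdot\widetilde{C}_0=\frac{4}{2n-1}-2m<\frac{2}{2n-1}$ and the local multiplicity bound coming from non-log-canonicity at $\mathsf{q}$. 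To turn this into a contradiction one must track the non-lc point through the blow up of $\mathsf{q}$ and its infinitely near points along $\widetilde{C}_0$, producing a lower bound on $(\widetilde{D}\cdot\widetilde{C}_0)_{\mathsf{q}}$ incompatible with $\widetilde{D}\cdot\widetilde{C}_0\leq\frac{4}{2n-1}$. For $n\geq 3$ the smallness of $\frac{4}{2n-1}$ drives the contradiction, while the borderline case $n=2$, where $\frac{4}{2n-1}=\frac{4}{3}>1$, must be handled by a finer local analysis at the quotient singularity, splitting according to whether $C_0$ is irreducible or is the reducible member $L_1+L'_1$ of Lemma \ref{lem:LR}.

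Once $C_0\subset\supp(D)$ is established, I would produce $T$ using the convexity Lemma \ref{lem:convex}. If $\mathsf{q}$ is a ramification point of $E\to\mathbb{P}^1$, then $C_0$ is tangent to $E$, the pair $(S_{2n,2n},C_0)$ is itself not log canonical at $\mathsf{p}_{_w}$, and $T=C_0$ works. Otherwise $(S_{2n,2n},C_0)$ is log canonical at $\mathsf{p}_{_w}$; since $C_0\sim_{\mathbb{Q}}D$ and $\supp(C_0)\subset\supp(D)$, the extremal divisor $D_\mu=(1+\varepsilon)D-\varepsilon C_0$ at the maximal admissible $\mu$ remains $\sim_{\mathbb{Q}}-K_{S_{2n,2n}}$, stays non-log-canonical at $\mathsf{p}_{_w}$ by part (4) of Lemma \ref{lem:convex}, and drops a component by part (3). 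Iterating this, with the core argument of the previous paragraph re-applied at each stage to keep a pencil member inside the support, reduces the number of components; the process terminates only when the member through the non-lc point can no longer be subtracted, i.e. when it is a tangent member non-log-canonical at $\mathsf{p}_{_w}$. That final member is the required effective anticanonical divisor $T$ with $\supp(T)\subset\supp(D)$.

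The hard part will be the local computation at $\mathsf{p}_{_w}$: controlling the non-lc point $\mathsf{q}$ and its infinitely near points on $E$ through the repeated blow ups needed for the decisive lower bound on $\widetilde{D}\cdot\widetilde{C}_0$, and in particular the delicate case $n=2$ where the crude intersection estimate fails and one must argue directly from the defining equations, much as in the $n=2$ case of Lemma \ref{lem:pw}. It may be cleaner to transport part of this analysis through the birational morphisms $\tau\colon Y_0\to\mathbb{P}^2$ and $\pi\colon Y_0\to\mathbb{P}(1,1,2n-1)$ of Lemmas \ref{lem:tau} and \ref{lem:pi}, where the anticanonical pencil becomes a transparent linear system and the contribution of $E$ is easier to read off.
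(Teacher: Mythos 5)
Your opening reduction agrees with the paper (Lemma \ref{lem:pw} forces $\mathsf{p}=\mathsf{p}_{_w}$, and on the weighted blow up $\pi_{_0}$ the coefficient of $E$ in the log pullback is at most $1$, so the non-lc locus lifts to a point $\mathsf{q}\in E$), but from there you take a genuinely different route, and that route has a gap at its core. The paper never analyses the pencil $\mathscr{C}$ on $Y_0$ directly: it pushes $\widetilde{D}+\frac{\alpha}{2n-1}E$ forward along $\tau_{_1}$ to the smooth cubic surface $Y_{2n-2}$, where $\overline{E}$ is a line, invokes the classification from \cite{Cheltsov2016} of effective anticanonical divisors that are non-log-canonical at a prescribed point with support inside a prescribed non-lc boundary (yielding $\overline{T}=F_1+F_2+\overline{E}$ or $C+\overline{E}$), verifies that any contracted $L_i$ through $\mathsf{q}$ already lies in $\supp(\widetilde{D})$, and transports $\overline{T}$ back. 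All of the delicate local work is thereby outsourced to the cubic-surface theorem. You propose instead to prove directly that the pencil member $C_0$ through $\mathsf{q}$ lies in $\supp(D)$ and that, after iterating Lemma \ref{lem:convex}, one lands on a member concentrated at a ramification point of $E\to\mathbb{P}^1$.

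The concrete problem is that the estimates you set up for the key step do not close, for any $n\geq 2$, not only for $n=2$. Writing $\pi_{_0}^{\ast}D=\widetilde{D}+mE$ and $c=m+\frac{2n-3}{2n-1}\leq 1$, non-log-canonicity at $\mathsf{q}$ gives $\mult_{\mathsf{q}}(\widetilde{D})>1-c=\frac{2}{2n-1}-m$, and if $\widetilde{C}_0\not\subset\supp(\widetilde{D})$ meets $E$ transversally at $\mathsf{q}$ the sharper local form of adjunction gives $(\widetilde{D}\cdot\widetilde{C}_0)_{\mathsf{q}}>1-c$ as well; comparing with $\widetilde{D}\cdot\widetilde{C}_0=\frac{4}{2n-1}-2m$ yields only $m<\frac{2}{2n-1}$, which is consistent with everything already known. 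So the "smallness of $\frac{4}{2n-1}$" does not by itself drive a contradiction for $n\geq 3$, and the entire burden falls on the infinitely-near-point analysis along $\widetilde{C}_0$ that you explicitly defer --- which is precisely the content of the cubic-surface classification the paper cites. Two further consequences of this missing step: you have not shown that $\mathsf{q}$ must be one of the two special points of the degree-two map $E\to\mathbb{P}^1$, and without that your convexity iteration could in principle terminate by exhausting the components of $D$ rather than by reaching a member that is non-log-canonical at $\mathsf{p}_{_w}$. As written, the proposal is a plausible outline whose essential difficulty is identified but not resolved, rather than a proof.
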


\begin{proof}
Let $(S_{2n,2n},D)$ be a log pair with $n \geq 2$, which is not log canonical at a point $\mathsf{p}$. Then by Lemma ~\ref{lem:pw}, $\mathsf{p}$ has to be $\mathsf{p}_{_{w}}$. For the weighted blow up $\pi_{_0} : Y_0 \to S_{2n,2n}$ at $\mathsf{p}_{_{w}}$ in Lemma \ref{lem:tau}, let $\widetilde{D}$ be the birational transform of $D$ of $\pi_{_0}$. Then we have an effective $\mathbb{Q}$-divisor
\begin{equation*}
    \widetilde{D}+\frac{\alpha}{2n-1}E \sim_{\mathbb{Q}} -K_{Y_0}
\end{equation*}
with some positive rational number $\alpha$ such that the log pair $\left (Y_0,\widetilde{D}+\frac{\alpha}{2n-1}E \right )$ is not log canonical at a point $\mathsf{q}$ on $E$. Furthermore, for the birational morphism $\tau_{_1}$ in \eqref{eq:total}, let 
\begin{equation*}
\overline{D}+\frac{\alpha}{2n-1}\overline{E} \sim_{\mathbb{Q}} -K_{Y_{2n-2}}
\end{equation*}
be its image on $Y_{2n-2}$, a smooth del Pezzo surface  of degree $3$. Then the log pair
\begin{equation*}
 \left (Y_{2n-2}, \overline{D}+\frac{\alpha}{2n-1}\overline{E} \right )
\end{equation*}
is also not log canonical at $\tau_{_1}(\mathsf{q})$. By \cite{Cheltsov2016}, this implies that there exists a unique effective anticanonical divisor $\overline{T}$ such that $(Y_{2n-2},\overline{T})$ is not log canonical at $\tau_{_1}(\mathsf{q})$ and the support of $\overline{T}$ is contained in the support of $\overline{D}+\frac{\alpha}{2n-1}\overline{E}$. Since $\overline{E}$ is a line on $Y_{2n-2}$, $\overline{T}$ has to be one of the following:
\begin{itemize}
 \item[$\bullet$] $F_1+F_2+\overline{E}$, three lines transversally intersecting at $\tau_{_1}(\mathsf{q})$ or

 \item[$\bullet$] $C+\overline{E}$, a conic with a line tangentially intersecting at $\tau_{_1}(\mathsf{q})$.
 \end{itemize}

Meanwhile, if there exists $L_i$ on $Y_0$ passing through the point $\mathsf{q}$ among the $(-1)$-curves contracted by $\tau_{_1}$ such that $\supp(L_i) \not\subset \supp(\widetilde{D})$, then it gives a contradiction,
\begin{equation*}
  1 = L_i \cdot (-K_{Y_0}) = L_i \cdot \left (\widetilde{D}+\frac{\alpha}{2n-1}E \right ) \geq \mult_{\mathsf{q}}(L_i)\cdot \mult_{\mathsf{q}} \left (\widetilde{D}+\frac{\alpha}{2n-1}E \right ) > 1.
\end{equation*}
Hence, for the birational transform $\widetilde{T}$ of $\overline{T}$ of $\tau_{_1}$, one of the following holds:
\begin{equation*}\label{eq:T-tilde}
    \left\{
        \begin{array}{ll}
        \widetilde{T} \sim -K_{Y_0},\ \mathrm{if \ there \ exists \ no} \ L_i \ \mathrm{passing \ through  \ \mathsf{q}}, \\  
        \widetilde{T}+2L_i \sim -K_{Y_0},\ \mathrm{if} \ \overline{T}=F_1+F_2+\overline{E} \ \mathrm{and \ there \ exists} \ L_i \ \mathrm{passing \ through \ \mathsf{q}}, \\
        \widetilde{T}+L_i \sim -K_{Y_0},\ \mathrm{if} \  \overline{T}=C+\overline{E} \ \mathrm{and \ there \ exists} \ L_i \ \mathrm{passing \ through \ \mathsf{q}},\\
        \end{array} \right.
    \end{equation*}
whose support is contained in the support of $\widetilde{D}+\frac{\alpha}{2n-1}E$. The image of $\widetilde{T}$ under $\pi_{_0}$ completes the proof.
\end{proof}

Now we reach the stage to state our result for existence.

\begin{theorem}\label{thm:cyl}
    The complete intersection $S_{2n,2n}$ in $\mathbb{P}(1,1,n,n,2n-1)$ admits an anticanonical polar cylinder if and only if $n=1$.
\end{theorem}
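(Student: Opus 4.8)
The plan is to treat the two implications separately, with the forward direction essentially immediate and the reverse direction (no cylinder for $n\ge 2$) absorbing all the work. For the \emph{if} part, when $n=1$ the surface $S_{2,2}$ is a smooth del Pezzo surface of degree $4$, namely the complete intersection of two quadrics in $\mathbb{P}^4$; here I would simply invoke the classical construction of an anticanonical polar cylinder on such a surface (as recorded in \cite{Cheltsov2016, Kishimoto2014}), so that no new argument is needed. All the substance lies in showing that for $n\ge 2$ no such cylinder can exist.

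For the \emph{only if} part I would argue by contradiction, following the template of Theorem~\ref{thm:absence}. Assume $n\ge 2$ and that $S_{2n,2n}$ carries a $(-K_{S_{2n,2n}})$-polar cylinder with boundary $D=\sum_i \lambda_i C_i$, inducing the rational map $\rho$ and pencil $\mathscr{L}$ as in diagram~\eqref{eq:pencil}. The first step is to show that the base locus of $\mathscr{L}$ is nonempty: if $\rho$ were a morphism it would be a conic bundle whose horizontal section $C_1\subset\supp(D)$ must, by the adjunction computation $(K_{S_{2n,2n}}+D)\cdot F=0$ on a general fiber $F$, carry coefficient $\lambda_1=2$; then $(S_{2n,2n},D)$ would fail to be log canonical along the entire curve $C_1$, contradicting Lemma~\ref{lem:pw}, which confines the non--log-canonical locus to the single point $\mathsf{p}_{_w}$. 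Hence $\mathscr{L}$ has a base point, and resolving its indeterminacy exactly as in Theorem~\ref{thm:absence} forces $(S_{2n,2n},D)$ to be non--log-canonical there; Lemma~\ref{lem:pw} then identifies the base point as $\mathsf{p}_{_w}$.

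The heart of the proof is to convert this into a contradiction. On the one hand, Lemma~\ref{lem:bdry} asserts that \emph{every} effective $H\qsim -K_{S_{2n,2n}}$ with $\supp(H)\subset\supp(D)$ is non--log-canonical at $\mathsf{p}_{_w}$. On the other hand, Proposition~\ref{prop:support2} produces one explicit such divisor $T$, whose shape is pinned down by the birational model: pulling $T$ back through the weighted blow-up $\pi_{_0}$ of Lemma~\ref{lem:tau} and the chain $\tau_{_1}$ of diagram~\eqref{eq:total} realizes it as the image of one of the two distinguished non--log-canonical anticanonical configurations on the cubic surface $Y_{2n-2}$ (three concurrent lines, or a conic tangent to a line). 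I would then feed this explicit $T$ into the convexity mechanism of Lemma~\ref{lem:convex}: combining $D$ with the components of $T$ and pushing the parameter to its maximal value strips off at least one component of $\supp(T)$ while keeping an effective anticanonical divisor $D'$ supported inside $\supp(D)$. The aim is to arrange that the resulting $D'$ is \emph{log canonical} at $\mathsf{p}_{_w}$ — which directly contradicts Lemma~\ref{lem:bdry}. Concretely one hopes that a divisor such as the reducible member $L_1+L_1'$ of Lemma~\ref{lem:LR}, whose branches meet transversally at $\mathsf{p}_{_w}$, furnishes the desired log-canonical boundary once it is exhibited inside $\supp(D)$.

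I expect the genuine obstacle to be precisely this last local computation at the quotient singularity $\mathsf{p}_{_w}$ of type $\tfrac{1}{2n-1}(1,1)$. Because $E$ has negative discrepancy $-\tfrac{2n-3}{2n-1}$ under $\pi_{_0}$, log canonicity of the trimmed divisor $D'$ at $\mathsf{p}_{_w}$ cannot be read off from a smooth-point multiplicity bound; instead one must work on $Y_0$, track the coefficients of $E$ and of the $(-1)$-curves $L_i$ contracted by $\tau_{_1}$, and use the transversality of $L_1,L_1'$ at $\mathsf{p}_{_w}$ together with the intersection numbers recorded in Lemma~\ref{lem:tau}. Controlling these coefficients uniformly in $n$ — so that removing a component genuinely restores log canonicity rather than merely relocating the non--log-canonical point along $E$ — is the most delicate part, and it is the step I would write out in full detail.
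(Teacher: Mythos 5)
Your skeleton matches the paper's up to and including the identification of the non-log-canonical point: the $n=1$ case by citation, the reduction via the base point of $\mathscr{L}$ to a point where $(S_{2n,2n},D)$ fails to be log canonical, the use of Lemma~\ref{lem:pw} to force that point to be $\mathsf{p}_{_w}$, and the invocation of Proposition~\ref{prop:support2} to extract the explicit anticanonical divisor $T$ with $\supp(T)\subset\supp(D)$. Where you diverge is in the endgame, and there the proposal has a genuine gap. You apply Lemma~\ref{lem:convex} to $D$ and $T$ to obtain $D'=D_\mu$ whose support omits a component of $\supp(T)$, and then \emph{hope} to prove that $(S_{2n,2n},D')$ is log canonical at $\mathsf{p}_{_w}$, contradicting Lemma~\ref{lem:bdry}. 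You correctly flag that this requires a delicate local computation at the $\tfrac{1}{2n-1}(1,1)$ point on $Y_0$, uniform in $n$ --- but you never carry it out, and it is not clear it can be carried out: dropping one component of $\supp(T)$ from the boundary does not obviously restore log canonicity at $\mathsf{p}_{_w}$, since a priori a \emph{different} non-log-canonical anticanonical configuration could still sit inside $\supp(D')$. Note also that clause (4) of Lemma~\ref{lem:convex} is unavailable to you in either direction, because $(S_{2n,2n},T)$ is itself \emph{not} log canonical at $\mathsf{p}_{_w}$.

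The paper closes the argument the other way around, and this sidesteps the computation entirely. By Lemma~\ref{lem:bdry}, since $\supp(D')\subset\supp(D)$ and $\mathsf{p}_{_w}$ is the base point of $\mathscr{L}$, the pair $(S_{2n,2n},D')$ is automatically \emph{not} log canonical at $\mathsf{p}_{_w}$; the contradiction is then with Proposition~\ref{prop:support2} itself, not with Lemma~\ref{lem:bdry}. Indeed, the proof of that proposition produces $T$ as the image of the \emph{unique} effective anticanonical divisor on the cubic surface $Y_{2n-2}$ that is non-log-canonical at the relevant point (the uniqueness is the result of \cite{Cheltsov2016} on cubic surfaces), so applying the proposition to $D'$ forces $\supp(T)\subset\supp(D')$ --- contradicting Lemma~\ref{lem:convex}(3). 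In short: you have the right ingredients, but the decisive step of your version (log canonicity of the trimmed divisor at $\mathsf{p}_{_w}$) is left unproven and is harder than necessary; replacing it with the uniqueness of $T$ coming out of Proposition~\ref{prop:support2} completes the proof without any new local analysis.
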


\begin{proof}
When $n=1$, the surface is a smooth del Pezzo surface of degree $4$, for which the existence of an anticanonical polar cylinder has been proven in \cite{Kishimoto2009}. We assume that $S_{2n,2n}$ contains an anticanonical polar cylinder with $n \geq 2$. Then there exists an effective $\mathbb{Q}$-divisor
\begin{equation*}
D \sim_{\mathbb{Q}} -K_{S_{2n,2n}}
\end{equation*}
such that $S_{2n,2n} \setminus \supp(D)$ is isomorphic to $\mathbb{A}^1 \times Z$ for some affine curve $Z$. This implies that the log pair 
\begin{equation*}
 (S_{2n,2n},D)
\end{equation*}
is not log canonical at a point $\mathsf{p}$. By Lemma \ref{lem:pw}, the point $\mathsf{p}$ has to be $\mathsf{p}_{_{w}}$ and by Proposition \ref{prop:support2}, we have an effective anticanonical divisor $T$ such that the log pair $(S_{2n,2n},T)$ is also not log canonical at ~$\mathsf{p}_{_{w}}$ with
\begin{equation*}
 \supp(T) \subset \supp(D).
\end{equation*}
Meanwhile, by Lemmas \ref{lem:convex}, and \ref{lem:bdry}, we have an effective $\mathbb{Q}$-divisor
\begin{equation*}
 D' \sim_{\mathbb{Q}} -K_{S_{2n,2n}}
\end{equation*}
 such that the log pair $(S_{2n,2n},D')$ is not log canonical at $\mathsf{p}_{_{w}}$ and 
 \begin{equation*}
  \supp(T) \not\subset \supp(D'),
 \end{equation*}
which contradicts to Proposition \ref{prop:support2}.
\end{proof}

Theorems \ref{thm:absence} and \ref{thm:cyl} along with Corollary \ref{cor:absence} imply the Main Theorem in Section \ref{sec:intro}.\\ 
As a final remark, we note a result in \cite{Sawahara2025}, which establishes the ample polar cylindricity of the surfaces obtained by blowing ups of $\mathbb{P}(1,1,2n-1)$ along $\eta_{_0}$ in \eqref{eq:total}.

\begin{remark}\label{rmk:Sawahara}
    In \cite[Theorem 4.1]{Sawahara2025}, it is proved that a normal rational surface $S$ contains an $H$-polar cylinder for any ample divisor $H$ on $S$ if the following conditions hold for some non-negative integer $k$:  
    \begin{enumerate}
     \item[(1)] A minimal resolution $f: \widetilde{S} \to S$ has a $\mathbb{P}^1$-fibration $g: \widetilde{S} \to \mathbb{P}^1$ such that 
\begin{itemize}
    \item[$\bullet$] $g$ admits a section with self-intersection number $-k$;
    \item[$\bullet$] all singular fibers of $g$ consist only of $(-1)$-curves and $(-2)$-curves;
    \item[$\bullet$] any irreducible curve on $\widetilde{S}$ with self-intersection number at most $2$ is contained in the singular fibers of $g$, possibly except for the section. 
\end{itemize}

     \item[(2)] All irreducible curves on $\widetilde{S}$ with self-intersection number at most $-2$ are contracted by $f$ with
     \begin{equation*}
        (-K_{\widetilde{S}})^2 \geq 5 - k.
     \end{equation*}
     \end{enumerate}

In particular, the intermediate surfaces preceding $S_{2n,2n}$ along $\eta_{_0}$ in \eqref{eq:total}
satisfy the above conditions with ~$k=2n-1$. Each surface has a minimal resolution admitting a $\mathbb{P}^1$-fibration with at most $2n+2$ singular fibers. Consequently, each of these surfaces contains an $H$-polar cylinder for any ample divisor ~$H$ on the surface, whereas this fails for $S_{2n,2n}$ itself when $n \geq 2$, by Theorem \ref{thm:cyl}. 
\end{remark}


\newpage

\section{Appendix}\label{sec:appendix}
This section contains the list of quasi-smooth well-formed complete intersection log del Pezzo surfaces of index one and their log canonical thresholds. Here $n$ is a positive integer.

\begin{table}[H]
    \begin{threeparttable}
    \setlength{\tabcolsep}{7pt}
    \centering
    \caption{Quasi-smooth Well-formed Log del Pezzo Hypersurfaces of Index One}
    \label{table_hypersurface}
    \begin{tabular}{|c c c||c c c|} 
     \hline 
     \emph{No.} & $(a_0,a_1,a_2,a_3,d)$ & lct & \emph{No.} & $(a_0,a_1,a_2,a_3,d)$ & lct\\ [0.5ex] 
     \hline\hline 
     \emph{1} & (2, 2n+3, 2n+3, 4n+5, 8n+12) & $1$ & \emph{13}& (5, 19, 27, 31, 81) & $\frac{25}{6}$ \\ [2ex]
     \emph{2} &  (1, 1, 2, 3, 6) & $\begin{array}{rl}
        1 ^a \\ 
        \frac{5}{6}^b
        \end{array}$ & \emph{14} & (7, 11, 27, 44, 88) & $\frac{35}{8}$\\[4ex]
     
        \emph{3} & (1, 3, 5, 8, 16)  & $1$ & \emph{15} & (11, 29, 39, 49, 127) & $\frac{33}{4}$\\[4ex]
     
     \emph{4} &  (3, 5, 7, 11, 25)& $\frac{21}{10}$ & \emph{16} & (13, 35, 81, 128, 256) & $\frac{91}{10}$ \\[3ex]
     
     \emph{5} &  (5, 14, 17, 21, 56)& $\frac{25}{8}$ & \emph{17} & (1, 1, 1, 2, 4) & $\begin{array}{rl}
        \frac{5}{6} ^i \\ 
        \frac{3}{4} ^j
        \end{array}$ \\[4ex]
     
        \emph{6} &   (7, 11, 27, 37, 81) & $\frac{49}{12}$ & \emph{18} & (1, 3, 5, 7, 15) & $\begin{array}{rl}
        1^k \\ 
        \frac{8}{15}^l
        \end{array}$ \\[4ex]
     
        \emph{7} & (9, 15, 23, 23, 69) & $6$ & \emph{19}& (3, 3, 5, 5, 15) & $2$\\[4ex]
    
        \emph{8} &  (13, 23, 35, 57, 127)& $\frac{65}{8}$ & \emph{20}& (3, 5, 11, 18, 36) & $\frac{21}{10}$\\[3ex]
     
        \emph{9} &   (1, 1, 1, 1, 3) & $\begin{array}{rl}
        \frac{3}{4}^c \\ 
        \frac{2}{3}^d
        \end{array}$ & \emph{21}& (5, 19, 27, 50, 100) & $\frac{25}{6}$ \\[4ex]
     
        \emph{10} &  (1, 2, 3, 5, 10) & 
     $\begin{array}{rl}
        1 ^e \\ 
        \frac{7}{10}^f
        \end{array}$& \emph{22}& (9, 15, 17, 20, 60) & $\frac{21}{4}$ \\[4ex]
     
        \emph{11} & (2, 3, 5, 9, 18) & $\begin{array}{rl}
        2 ^g \\ 
        \frac{11}{6}^h
        \end{array}$ & \emph{23}& (11, 49, 69, 128, 256) & $\frac{55}{6}$\\[4ex]
     
        \emph{12} &  (3, 5, 7, 14, 28) & $\frac{9}{4}$ &  & &\\ [1ex] 
     \hline
\end{tabular}  

\begin{tablenotes}\footnotesize
    \item[$a$] $\vert - K_{S_6} \vert$ contains no cuspidal curves
    \item[$b$] $\vert - K_{S_6} \vert$ contains a cuspidal curve
    \item[$c$] $S_3$ has no Eckardt points 
    \item[$d$] $S_3$ has an Eckardt point 
    \item[$e$] $H_x$ has an ordinary double point
    \item[$f$] $H_x$ has a non-ordinary double point 
    \item[$g$] $H_y$ has a tacnodal point
    \item[$h$] $H_y$ has no tacnodal points
    \item[$i$] $\vert -K_{S_4} \vert $ contains no tacnodal curves
    \item[$j$] $\vert -K_{S_4} \vert $ contains a tacnodal curve
    \item[$k$] the defining equation of $S_{15}$ contains $yzt$
    \item[$l$] the defining equation of $S_{15}$ contains no $yzt$
    \end{tablenotes} 
\end{threeparttable}
\end{table}

\newpage

\begin{table}[H]
    \begin{threeparttable}
    \setlength{\tabcolsep}{5pt}
    \centering
    \caption{Complete Intersection Log del Pezzo Surfaces of Index One}
    \label{table_complete intersection}
    \begin{tabular}{|c c c||c c c|} 
     \hline 
     \emph{No.} & $(a_0,a_1,a_2,a_3,a_4,d_1,d_2)$ & lct & \emph{No.} & $(a_0,a_1,a_2,a_3,a_4,d_1,d_2)$ & lct\\ [0.5ex] 
     \hline\hline 
     \emph{1} & (2,2,3,3,3,6,6) & $\geq\frac{6}{5}$ & \emph{21}& (5,9,12,20,31,36,40) & $\frac{55}{24}$ \\ [1.5ex]

     \emph{2} &  (3,3,5,5,7,10,12) & $\geq\frac{7}{4}$ & \emph{22} & (6,8,9,11,13,22,24) & $\geq 3$\\[1.5ex]

     \emph{3} & (4,5,7,10,13,18,20)  & $\geq 2$ & \emph{23} & (9,23,30,38,67,76,90) & $\frac{81}{14}$\\[1.5ex]

     \emph{4} &  (5,14,17,21,37,42,51)& $\frac{10}{3}$ & \emph{24} & (11,27,36,62,97,108,124) & $\frac{121}{24}$ \\[1.5ex]

     \emph{5} &  (9,15,23,23,31,46,54)& $\geq\frac{23}{6}$ & \emph{25} & (11,29,38,48,85,96,114) & $\frac{99}{14}$ \\[1.5ex]

     \emph{6} &   (10,17,25,34,43,60,68) & $6$ & \emph{26} & (13,23,35,47,57,70,104) & $\frac{65}{8}$ \\[1.5ex]

     \emph{7} & (11,29,39,49,59,88,98) & $\geq\frac{117}{16}$ & \emph{27}& (1,2,2,3,3,4,6) & $1$\\[1.5ex]

     \emph{8} &  (13,22,55,76,97,110,152)& $\frac{117}{20}$ & \emph{28}& (1,4,5,7,11,12,15) & $1$\\[1.5ex]

     \emph{9} &   (13,23,35,57,79,92,114) & $\frac{91}{12}$ & \emph{29}& (1,5,9,13,17,18,26) & $1$ \\[1.5ex]

     \emph{10} &  (2,3,4,5,5,8,10) & $\geq\frac{9}{8}$& \emph{30}& (1,8,13,19,31,32,39) & $1$ \\[1.5ex]

     \emph{11} & (3,5,6,8,13,16,18) & $\geq\frac{5}{3}$ & \emph{31}& (1,4,7,10,13,14,20) & $1$\\[1.5ex]

     \emph{12} &  (5,7,10,14,23,28,30) & $\frac{35}{12}$ & \emph{32} & (1,7,11,17,27,28,34) & $1$\\ [1.5ex] 

     \emph{13} &  (6,7,9,11,14,18,28) & $\geq\frac{7}{2}$ &  \emph{33}& (1,9,15,23,23,24,46)& $1$\\ [1.5ex] 

     \emph{14} &  (9,15,23,23,37,46,60) & $\frac{45}{8}$ & \emph{34} &(1,3,3,5,5,6,10) & $1$\\ [1.5ex] 

     \emph{15} &  (11,18,27,44,61,72,88) & $\frac{77}{16}$ & \emph{35} & (1,5,8,12,19,20,24)& $1$\\ [1.5ex]
     
     \emph{16} &  (11,29,39,49,67,78,116) & $\frac{77}{10}$ &  \emph{36}& (1,7,12,17,23,24,35)& $1$\\ [1.5ex] 

     \emph{17} &  (13,23,34,56,89,102,112) & $\frac{104}{15}$ & \emph{37} & (1,2,2n+1,2n+1,4n+1,4n+2,4n+3)& $1$\\ [1.5ex] 

     \emph{18} &  (14,19,25,32,45,64,70) & $\frac{28}{3}$ &  \emph{38}& (1,1,n,n,2n-1,2n,2n)& $\frac{2n+3}{4n+2}$\\ [2ex] 

     \emph{19} &  (2,3,5,6,7,10,12) & $\geq\frac{3}{2}$ & \emph{39} & (1,2,3,4,5,6,8)& $\begin{array}{rl}
        1 ^a \\ 
        \frac{7}{12}^b
        \end{array}$\\ [1.5ex] 
     \emph{20} &  (3,5,7,9,11,16,18) & $\frac{14}{11}$ & &  & \\ [1ex] 
     \hline
\end{tabular}  

\begin{tablenotes}\footnotesize
    \item[$a$] the defining equation of degree $6$ contains $yt$
    \item[$b$] the defining equation of degree $6$ contains no $yt$
    \end{tablenotes} 
\end{threeparttable}
\end{table}

\newpage

\begin{ack}
    The authors would like to thank Takashi Kishimoto and Masatomo Sawahara for their valuable suggestions. Their comments enable the authors to improve the result as well as the exposition. 
\end{ack}   

\begin{fund}
    The first author was supported by the National Research Foundation of Korea (NRF-2023R1A2C1003390 and NRF-2022M3C1C8094326). The second author was supported by the National Research Foundation of Korea (NRF-2019R1A6A1A11051177, NRF-2020R1A2C1A01008018, NRF-2021R1A6A1A10039823, and NRF-2022M3C1C8094326). The third author was supported by the National Research Foundation of Korea (NRF-2020R1A2C1A01008018 and NRF-2022M3C1C8094326).
\end{fund}


\end{document}